\newtheorem{thm}{Theorem}[section]
\newtheorem{lm}[thm]{Lemma}
\newtheorem{prop}[thm]{Proposition}
\newtheorem*{thm*}{Theorem}
\newtheorem*{lm*}{Lemma}
\theoremstyle{definition}
\DeclareMathOperator{\rank}{rank}
\DeclareMathOperator{\brank}{brank}
\DeclareMathOperator{\spa}{span}
\DeclareMathOperator{\conv}{conv}
\DeclareMathOperator*{\argmax}{arg\,max}
\DeclareMathOperator{\Sym}{Sym}
\newcommand{\abs}[1]{{\left\lvert #1 \right\rvert}}
\DeclareMathOperator{\R}{\mathbb{R}}
\DeclareMathOperator{\C}{\mathbb{C}}
\title{
\bfseries
Maximum relative distance between real rank-two and rank-one tensors}
\author{Henrik Eisenmann\thanks{Max Planck Institute for Mathematics in the Sciences, 04103 Leipzig, Germany} 
\thanks{Corresponding Author. henrik.eisenmann@mis.mpg.de} \and Andr\'e Uschmajew${}^{\ast}$}
\date{}
\begin{document}

\maketitle
\begin{abstract}
 It is shown that the relative distance in Frobenius norm of a real symmetric order-$d$ tensor of rank two to its best rank-one approximation is upper bounded by $\sqrt{1-(1-1/d)^{d-1}}$. This is achieved by determining the minimal possible ratio between spectral and Frobenius norm for symmetric tensors of border rank two, which equals $\left(1-{1}/{d}\right)^{(d-1)/{2}}$. These bounds are also verified for  arbitrary real rank-two tensors by reducing to the symmetric case.
\end{abstract}

\section{Introduction}
It is a well-known fact that the minimal possible ratio between spectral and Frobenius norm of a real $n \times n$ matrix is $1 / \sqrt{n}$, and is achieved for any matrix with identical singular values, that is, for multiples of orthogonal matrices. Since the spectral norm of a matrix measures
the length of its best rank-one approximation, this statement has the geometric meaning that orthogonal matrices achieve the largest possible relative distance to rank-one matrices. More generally, using singular value decomposition, one can show that the minimal ratio between spectral and Frobenius norm of a rank-$k$ matrix is $1/\sqrt{k}$ and is achieved when all  nonzero singular values are equal.

There has been considerable interest in determining the minimal possible ratio between spectral norm $\| A \|_\sigma$  and Frobenius norm $\| A \|_F$ of an $n_1 \times \dots \times n_d$ tensor $A$; see, e.g.,~\cite{Qi_best_11,KM_bounds_15,derksen2017theoretical,LNSU_orthogonal_18,Li2020,AKU_Cheb_20}. As in the matrix case, this ratio measures the distance of 
$A$ to the set of rank-one tensors, and is hence of both theoretical and practical relevance in problems of  low-rank approximation and entanglement. The precise relation between the spectral norm of $A$ and its distance to rank-one tensors is as follows:
\begin{equation}\label{eq:distance}
\min_{\rank B \le 1} \frac{\| A - B \|_F}{\| A \|_F} =\sqrt{ 1 - \frac{\| A \|^2_\sigma}{\| A \|_F^2}}.
\end{equation}
Therefore, the minimal possible ratio $\| A \|_\sigma / \| A \|_F$ that can be achieved is also called the best rank-one approximation ratio of the given tensor space~\cite{Qi_best_11}. By~\eqref{eq:distance}, it expresses the maximum relative distance of a tensor to the set of rank-one tensors.

Despite some recent progress achieved in the aforementioned references and others, determining the best rank-one approximation ratio for tensors remains a difficult problem in general and is largely open. One reason is

the lack of a suitable analog to the singular value decomposition. Moreover, the best rank-one approximation ratio of tensors usually differs over the real and complex field, as well as for nonsymmetric and symmetric tensors of the same size.

The available results in the literature focus on the best rank-one approximation ratio in the full tensor space. As for matrices, it would however also be useful to estimate its value in dependence of the tensor rank. In this work we take a first step in this direction. We determine the minimal ratio between spectral and Frobenius norm of real rank-two tensors, and obtain that it is actually the same for symmetric and general tensors. Recall that for matrices this value equals $1/\sqrt{2}$. 

For tensors one should also take into account that the set of tensors of rank at most two is not closed. Our main result is on symmetric tensors  and reads as follows.

\begin{thm}\label{thm:main}
Let $A$ be a real symmetric tensor of order $d\geq 3$ and rank at most two. Then
\begin{equation}\label{eq: rank two inequality}
 \|A\|_\sigma>\left( 1 - \frac{1}{d} \right)^{\frac{d-1}{2}}\|A\|_F
\end{equation}
and this bound is sharp. In particular,
\[
\min_{\substack{A \neq 0 \\ \brank A  \le 2} } \frac{\| A \|_\sigma}{\|A\|_F} = \left( 1 - \frac{1}{d} \right)^{\frac{d-1}{2}},
\]
where $\brank$ denotes border rank, and the minimum is taken over real symmetric tensors. Up to orthogonal transformation and scaling the minimum is achieved only for the tensor
\[
W_d = \lim_{t \to 0}\frac{1}{t} \left[ (e_1+te_2)^d-e_1^d \right] = d e_1^{d-1} e_2.
\]
\end{thm}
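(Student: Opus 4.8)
The plan is to reduce everything to binary forms and then to a planar convex‑geometry problem that tames the spectral norm. Since $A$ has border rank at most two it is a limit of tensors supported on a two‑dimensional subspace, so $A$ itself lies in $\Sym^d V$ for some $V=\spa\{u,v\}$ with $\dim V\le 2$. The Frobenius norm only sees $V$, and for the spectral norm $\|A\|_\sigma=\max_{\|x\|=1}\abs{A(x,\dots,x)}$ the contraction with any component of $x$ orthogonal to $V$ vanishes, so a maximizer may be taken in $V$. Hence both norms, and the ratio, are unchanged if we treat $A$ as a binary form $p(x_1,x_2)=\sum_k\binom{d}{k}a_k x_1^{d-k}x_2^k$. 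Writing $m_\theta=(\cos\theta,\sin\theta)$, one has the reproducing identity $g(\theta):=p(m_\theta)=\langle p,m_\theta^{d}\rangle_B$ for the apolar (Bombieri) inner product $\langle\cdot,\cdot\rangle_B$, under which $\|p\|_F^2=\langle p,p\rangle_B=\sum_k\binom{d}{k}a_k^2$, $\|m_\theta^d\|_B=1$, and $\|p\|_\sigma=\max_\theta\abs{g(\theta)}$.

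\emph{Classification of the border‑rank‑two locus.} By Sylvester's apolarity theory a binary form has border rank at most two iff it is annihilated by a nonzero quadratic constant‑coefficient differential operator $q(\partial_1,\partial_2)$. Over $\R$ this forces $p$ to lie either in a \emph{secant plane} $\Pi=\spa\{u^d,v^d\}$ with $u,v$ linearly independent (distinct real roots of $q$; exactly the forms of rank $\le 2$) or in a \emph{tangent plane} $\Pi=\spa\{\ell^{d},\ell^{d-1}m\}$ (a double root of $q$), which contains the tangential forms such as $W_d$. It therefore suffices to minimize the ratio over each such two‑plane and then over the family of planes.

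\emph{A width reformulation of the per‑plane problem.} Fix a plane $\Pi$ and a $\langle\cdot,\cdot\rangle_B$‑orthonormal basis $f_1,f_2$, and set $\Phi(\theta)=\bigl(\langle f_1,m_\theta^d\rangle_B,\langle f_2,m_\theta^d\rangle_B\bigr)\in\R^2$. For $p=c_1f_1+c_2f_2$ one gets $\|p\|_F=\|c\|$ and $g(\theta)=\langle c,\Phi(\theta)\rangle$, whence
\[
\min_{0\neq p\in\Pi}\frac{\|p\|_\sigma}{\|p\|_F}=\min_{\|c\|=1}\max_\theta\abs{\langle c,\Phi(\theta)\rangle},
\]
which is precisely the minimal half‑width of the symmetric convex body $\conv\{\pm\Phi(\theta):\theta\}\subset\R^2$. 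This is the device that replaces the troublesome maximum defining $\|p\|_\sigma$ by a tractable planar quantity. Up to orthogonal rotation every tangent plane equals $\Pi_0=\spa\{e_1^d,W_d\}$, for which $f_1=e_1^d$, $f_2=W_d/\sqrt d$ yield $\Phi_0(\theta)=\cos^{d-1}\theta\,(\cos\theta,\sqrt d\,\sin\theta)$.

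\emph{Evaluation and the main obstacle.} On the tangent plane the width is attained in the short ($f_2$) direction: with $p_\beta=x_1^{d-1}(\cos\beta\,x_1+\sin\beta\,x_2)$ one computes $\|p_\beta\|_F^2=\cos^2\beta+\tfrac1d\sin^2\beta$ and $\|p_\beta\|_\sigma=\max_\theta\abs{\cos^{d-1}\theta\cos(\theta-\beta)}$, and a one‑variable minimization gives the value $(1-1/d)^{(d-1)/2}$ exactly at $\beta=\pi/2$, i.e. at $W_d$ (while $\beta=0$ gives the rank‑one value $1$). The remaining and \emph{hardest} step is to show that every secant plane has strictly larger minimal half‑width, the value $(1-1/d)^{(d-1)/2}$ being recovered only as the two directions of $\Pi$ coalesce, which degenerates $\Pi$ to a tangent plane and the extremizer to $W_d$. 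I would establish this by an explicit half‑width estimate for the secant curves $\Phi_\alpha$ attached to the symmetric pair $u,v=(\cos\alpha,\pm\sin\alpha)$, together with a monotonicity argument in the half‑angle $\alpha$ showing the width decreases to its tangential limit as $\alpha\downarrow 0$. Granting this, any rank‑$\le2$ tensor lies in a secant (or rank‑one) plane and so satisfies the strict inequality \eqref{eq: rank two inequality}, while the border‑rank‑two minimum $(1-1/d)^{(d-1)/2}$ is attained exactly on the rotation–scaling orbit of $W_d$, proving both assertions of the theorem.
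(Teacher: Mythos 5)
Your reduction to binary forms, the apolarity-based split of the border-rank-two locus into secant and tangent planes, and the reformulation of the per-plane minimum of $\|p\|_\sigma/\|p\|_F$ as the minimal half-width of the planar convex body $\conv\{\pm\Phi(\theta)\}$ are all sound, and this is a genuinely different framework from the paper (which instead analyzes Clarke subdifferentials of $A\mapsto\|A\|_\sigma^2/\|A\|_F^2$ and runs a case analysis on the coefficients $\alpha,\beta$). But the proposal has a genuine gap exactly where the theorem is hard: the claim that every secant plane has minimal half-width strictly larger than $(1-1/d)^{(d-1)/2}$ is never proven. You explicitly defer it (``I would establish this by an explicit half-width estimate \dots together with a monotonicity argument'') and then write ``Granting this''. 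Neither the half-width estimate for the secant curves $\Phi_\alpha$ nor the monotonicity in the half-angle $\alpha$ is carried out, and neither is routine: the quantity in question is a min-max (over the plane and over $\theta$) depending on a parameter, and families of such values are not monotone in general; nothing in the proposal indicates a mechanism for proving the monotonicity. For comparison, this single step occupies the bulk of the paper's proof: a nonsmooth first-order criticality argument showing rank-two critical points must admit several best symmetric rank-one approximations (Proposition~\ref{prop:NoCriticalPoint}), a count of critical points of $p_A$ on the circle establishing uniqueness when $\alpha>\beta>0$ (Proposition~\ref{prop: number of maximizer} via Lemmas~\ref{lm: number of critical p} and~\ref{lm:position_maximizer}), and separate explicit estimates for the remaining cases $\alpha>0\geq\beta$ and $\alpha=\beta$ (Propositions~\ref{lm:sum} and~\ref{prop:diffequal}, the latter itself requiring the refined Jensen inequality of Lemma~\ref{lm:conv}). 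As it stands, your argument only establishes the tangent-plane statement modulo computation, and assumes the secant-plane statement.

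Two further points. First, even the tangent-plane evaluation is asserted rather than shown: minimizing $\max_\theta\abs{\cos^{d-1}\theta\,\cos(\theta-\beta)}^2\big/\bigl(\cos^2\beta+\tfrac1d\sin^2\beta\bigr)$ over $\beta$, with the minimum attained only at $\beta=\pi/2$, is the content of the paper's Proposition~\ref{prop:boundary}, whose proof needs two different spectral-norm lower bounds in two regimes; since $\max_\theta$ admits no closed form, your ``one-variable minimization'' would require comparable work. Second, your Sylvester step is misstated over $\R$: a real binary form annihilated by a real quadratic with complex-conjugate roots (e.g.\ harmonic forms, annihilated by $\partial_1^2+\partial_2^2$) is apolar to a nonzero quadratic yet does not have real border rank two, so ``border rank at most two iff annihilated by a nonzero quadratic'' fails as an equivalence over the reals. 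You only need the forward inclusion (real border rank $\le 2$ implies membership in a real secant or tangent plane), which is true and corresponds to the paper's Lemma~\ref{lm:boundarytensors}, but the case of complex roots must be explicitly excluded rather than silently omitted.
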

Here $e_1,e_2$ are two orthogonal unit tensors, $u^d$ abbreviates $u \otimes \dots \otimes u$ ($d$ times) and $u^{d-1}v$ denotes the symmetric part of $u^{d-1} \otimes v$ (see below for notation).

The proof of Theorem~\ref{thm:main} constitutes the main part of this work and will be given in section~\ref{sec: proof}. 
The result however raises the question,
whether the same bounds hold for general nonsymmetric tensors of rank two. In section~\ref{sec: nonsymmetric} we show that the answer is affirmative by reducing the question to the symmetric case.

\begin{thm}\label{thm:main2}
Let $A$ be a real $n_1 \times \dots \times n_d$ tensor of rank at most two. Then
\begin{equation}
 \|A\|_\sigma>\left( 1 - \frac{1}{d} \right)^{\frac{d-1}{2}}\|A\|_F
\end{equation}
and this bound is sharp. In particular, assuming $n_i \ge 2$ for $i=1,\dots,d$,
\[
\min_{\substack{A \neq 0 \\ \brank A  \le 2} } \frac{\| A \|_\sigma}{\|A\|_F} = \left( 1 - \frac{1}{d} \right)^{\frac{d-1}{2}},
\]
where $\brank$ denotes border rank, and the minimum is taken over real $n_1 \times \dots \times n_d$ tensors.
\end{thm}

Note that while for symmetric tensors the notions of rank and symmetric rank are not the same in general~\cite{Shitov_counterex_18}, they coincide for rank-two tensors, see, e.g.,~\cite{ZHQ_Comons_16}.

Due to the relation~\eqref{eq:distance} the theorems above are equivalent to the following statement on the maximum relative distance of a real rank-two tensor to the set of rank-one tensors.
\begin{thm}
Let $A$ be a real tensor of order $d\geq 3$ and rank at most two. Then 
\[
\min_{\rank B\leq 1}
\frac{\|A-B\|_F}{\|A\|_F}<\sqrt{ 1-\left(1-\frac{1}{d}\right)^{d-1}}
\]
 and this bound is sharp both for general as well as for symmetric tensors. Equality is achieved for the symmetric tensor $W_d$ as above.
\end{thm}

It is interesting to note that for $d\to\infty$ our results imply
\[
\min_{\substack{A \neq 0 \\ \brank(A) \le 2} } \frac{\| A \|_\sigma}{\|A\|_F} \searrow \frac{1}{\sqrt{e}} \approx 0.6065
\]
and 
\[
\max_{\substack{A \neq 0 \\ \brank(A) \le 2} }\min_{\rank B\leq 1}
\frac{\|A-B\|_F}{\|A\|_F}\nearrow \sqrt{1-\frac{1}{e}}\approx 0.7951.
\]
In particular, both quantities are bounded independently of $d$.

\subsection*{Notation}

We consider the subspace $\Sym_d(\R^n)$ of real symmetric $n \times \dots \times n$ tensors $A = [a_{i_1,\dots,i_d}]$ of order $d$. It inherits the Euclidean inner product $\langle A,B \rangle_F = \sum_{i_1,\dots,i_d} a_{i_1 \dots i_d} b_{i_1\dots i_d}$ from the ambient space, which induces the Frobenius norm via
\[
\| A \|_F^2 = \langle A, A \rangle_F.
\]
It will be convenient to introduce the notation
\[
\pm u^d = \pm u \otimes \dots \otimes u
\]
for symmetric rank-one tensors, and similarly
\[
u_1u_2\dots u_d=\frac{1}{d!}\sum_{\sigma\in\mathfrak{S}_d}u_{\sigma 1}\otimes u_{\sigma 2}\otimes \dots \otimes u_{\sigma_d}
\]
for the symmetrization of a nonsymmetric rank-one tensor $u_1 \otimes u_2 \otimes \dots \otimes u_d$. It equals the orthogonal projection of $u_1 \otimes u_2 \otimes \dots \otimes u_d$ onto $\Sym_d(\R^n)$. Specifically, the notation $u^k v^\ell$ denotes the symmetrization of the rank-one tensor $u^{\otimes k} \otimes v^{\otimes \ell}$. For symmetric rank-one tensors $u^d$ and $v^d$ it holds that $\langle u^d,v^d\rangle_F=\langle u,v\rangle^d$ and, therefore, $\|u^d\|_F=\|u\|^d$.

To any symmetric tensor $A$  one associates a homogeneous polynomial
\[
p_A(u) = \sum_{i_1,\dots,i_d} a_{i_1 \dots i_d} u_{i_1} \dots u_{i_d}=\langle A, u^d\rangle_F.
\]
The spectral norm of $A$ is then defined as
\[
\| A \|_\sigma = \max_{u\neq 0}\frac{1}{\|u\|^d}|\langle A,u^d \rangle_F|=\max_{u\neq 0} \frac{1}{\|u\|^d}|p_A(u)|.
\]
Due to a result of Banach~\cite{Banach1938}, this definition of spectral norm for symmetric tensors is consistent with the general one, which is given in~\eqref{eq: spectral norm} further below. 
If $w$ is a normalized maximizer of $\frac{1}{\|w\|^d}\abs{p_A(w)}$, then $\lambda w^d$ with $\lambda= p_A(w)=\langle A,w^d\rangle_F$ is a best symmetric rank-one approximation of $A$ in Frobenius norm, that is, it satisfies
\[
\|A-\lambda w^d\|_F=\min_{u\in\R^n,\mu\in\R}\|A-\mu u^d\|_F,
\]
and vice versa.

A symmetric tensor of rank at most two takes the form
\[
A = \alpha u^d -\beta v^d
\]
for vectors $u,v$ and scalars $\alpha,\beta\neq0 $, and the rank is equal to two if and only if $u$ and $v$ are linearly independent. Note that the difference notation will turn out to be convenient later.
 Technically, this defines tensors of \emph{symmetric rank} at most two. But since for rank two both notions of rank coincide~\cite{ZHQ_Comons_16}, we can just use the word rank throughout. It is well known that the set of tensors of rank at most two is not closed~\cite{SL_rank_08}. This is also true when restricting to symmetric tensors. The tensors in the closure are said to have border rank at most two, denoted as $\brank A  \le 2$.

\section{Proof of the main result}\label{sec: proof}

For proving Theorem~\ref{thm:main} we will determine the infimum value of the optimization problem
\begin{equation}\label{eq:optproblem}
      \inf_{\substack{\alpha, \beta\in\R\\ \|u\|=\| v\|=1}}F(\alpha,\beta,u,v) =  \frac{\|\alpha u^d - \beta v^d\|_\sigma^2}{\|\alpha u^d-\beta v^d\|_F^2}.
\end{equation}
Here we can always additionally assume that $\langle u,v\rangle \geq 0$ and $\alpha>0$. 
We will proceed in several steps. First, in section~\ref{sec:wTensor}, we validate that the tensor $W_d$, which has symmetric border rank two, achieves equality in~\eqref{eq: rank two inequality}. Hence the infimum in~\eqref{eq:optproblem} cannot be larger than $(1-\frac1d)^{d-1}$. We next consider in section~\ref{sec: optimiality condition} the first-order necessary optimality condition for~\eqref{eq:optproblem} and show that it cannot be fulfilled for rank-two tensors admitting a unique symmetric best rank-one approximation (Proposition~\ref{prop:NoCriticalPoint}). In other words, the potential candidates for achieving the infimum in~\eqref{eq:optproblem} are rank-two tensors with more than one symmetric best rank-one approximation. In section~\ref{sec:condition} we therefore derive a criterion for a symmetric rank-two tensor to have a unique symmetric best rank-one approximation (Proposition~\ref{prop: number of maximizer}), and validate by hand in sections~\ref{sec:caseSum} and~\ref{sec:caseEqual} that for tensors which do not satisfy this criterion the value of $F$ is strictly larger than $(1-\frac1d)^{d-1}$. It then remains to show in section~\ref{sec:caseLim}, that among the tensors of border rank two, and up to orthogonal transformation, only tensor $W_d$ achieves the infimum.  Taken together, these steps provide a complete proof of Theorem~\ref{thm:main}.

In our proofs we will frequently assume that $\alpha u^d - \beta v^d\in\Sym_d(\R^2)$ since we can always restrict to $\Sym_d(\spa\{u,v\})$.

\subsection{The ratio for tensor $W_d$}\label{sec:wTensor}

Recall that $W_d= e_1^{d-1}e_2^{}=\frac{d}{dt}(e_1+te_2)^d|_{t=0}$. We have $\|W_d\|_F^2=d$. The spectral norm is given by following optimization problem:
\begin{align*}
    \max\, d x^{d-1}y \quad
    \text{s.t.} \quad x^2+y^2=1.
\end{align*}
The KKT conditions for this problem lead to the relation
\begin{align*}
    (d-1)x^{d-2}y^2-x^d=0,
\end{align*}
that is, either $x=0$, or $x^2=(d-1)y^2$. We find that $\smash{x=\sqrt{\frac{d-1  }{d}}}$ and $y=\frac{1}{\sqrt{d}}$ is a maximizer with the value $\|W_d\|_\sigma=d \left(\frac{d-1}{d}\right)^{(d-1)/2}\frac{1}{\sqrt{d}}
$, and therefore 
\[
\frac{\|W_d\|^2_\sigma}{\|W_d\|_F^2}=\left(1-\frac{1}{d}\right)^{d-1}.
\]

\subsection{Optimality condition for symmetric rank-two tensors}\label{sec: optimiality condition}

The target function in~\eqref{eq:optproblem} can be written as a composition
\[
F(\alpha,\beta,u,v) = G(\varphi(\alpha,\beta,u,v))
\]
where
\[
G \colon \Sym_d(\R^n) \to \R, \quad G(A) = \frac{\| A \|_\sigma^2}{\| A \|_F^2},
\]
and
\[
\varphi \colon \R \times \R \times \R^n \times \R^n \to \Sym_d(\R^n), \quad \varphi(\alpha,\beta,u,v) = \alpha u^d - \beta v^d.
\]
While $\varphi$ is smooth, the map $G$ is not differentiable in all points. However, it is the quotient of the smooth function $A\mapsto \|A\|_F^2$ and the convex function $A\mapsto \|A\|_\sigma^2$. Therefore, the rules for generalized gradients of regular functions are applicable; see~\cite[Section 2.3]{Clarke_Book_Opt}. It follows that the subdifferential of $G$ in a point $A$ can be computed using a quotient rule, which yields
\[
\partial G(A) = \frac{2\|A\|_\sigma}{\|A\|_F^4} [ \partial(\|A\|_\sigma^{})\|A\|_F^2-A\|A\|^{}_\sigma ].
\]
Here $\partial(\|A\|_\sigma^{})$ denotes the subdifferential of the spectral norm in $A$. 
The derivative of $\varphi$ equals
\[
\varphi'(\alpha,\beta,u,v)[\delta \alpha, \delta \beta, \delta u, \delta v] = u^{d-1} (\alpha d\cdot\delta u +\delta \alpha \cdot u) - v^{d-1}(d \beta\cdot\delta v+\delta \beta \cdot v),
\]
which leads to
\begin{equation}\label{eq: subdifferential f}
\begin{aligned}
&\partial F(\alpha, \beta,u,v)[\delta \alpha, \delta \beta, \delta u, \delta v]\\
&=\frac{2\|A\|_\sigma}{\|A\|_F^4}\langle \partial(\|A\|_\sigma^{})\|A\|_F^2-A\|A\|^{}_\sigma
,u^{d-1} (\alpha d\cdot\delta u +\delta \alpha \cdot u) - v^{d-1}(d \beta\cdot\delta v+\delta \beta \cdot v ) \rangle_F
\end{aligned}
\end{equation}
with $A = \varphi(\alpha,\beta,u,v) = \alpha u^d - \beta v^d$. The subdifferential of the spectral norm can be characterized as
\begin{equation}\label{eq: subdifferential spec norm}
\partial (\|A\|_\sigma) = \conv \,\argmax \{ \langle A, X \rangle_F \colon X\in \Sym_d(\R^n),\,\rank X  = 1,\, \|X\|_F=1\}, 
\end{equation}
see~\cite[Theorem 2.1]{Clarke_Gradients_75} in general, and~\cite[Section 2.3]{AKU_Cheb_20} in particular. In words, $\partial (\|A\|_\sigma)$ equals the convex hull of the normalized symmetric best rank-one approximations of $A$.

From~\eqref{eq: subdifferential f} and~\eqref{eq: subdifferential spec norm} one concludes that the first-order optimality condition $0\in \partial F(\alpha, \beta,u,v)$ (see, e.g.,~\cite[Proposition 2.3.2]{Clarke_Book_Opt}) for problem~\eqref{eq:optproblem} implies that there exists $X$ in the convex set~\eqref{eq: subdifferential spec norm} such that
\[
\langle X - \lambda A, u^{d-1} (\alpha d\cdot\delta u +\delta \alpha \cdot u) - v^{d-1}(d \beta\cdot\delta v+\delta \beta \cdot v ) \rangle_F = 0 
\]
for all $(\delta \alpha, \delta \beta,\delta u, \delta v)$ and some $\lambda \in \R$. This is equivalent with just requiring
\[
\langle X - \lambda A, u^{d-1}\delta u+v^{d-1}\delta v \rangle_F = 0 
\]
for all $\delta u$ and $\delta v$. Let $P_{u,v}$ denote the orthogonal projection onto the linear subspace $\{u^{d-1}\delta u+v^{d-1}\delta v\colon \delta u,\delta v\in \R^n\}$ of $\Sym_d(\R^n)$. Taking into account that $P_{u,v} A = P_{u,v} (\alpha u^d-\beta v^d) = \alpha u^d-\beta v^d$, we conclude that the optimality condition can be written as
\begin{align}
\label{eq:optcondition}
    \lambda(\alpha u^d-\beta v^d)\in P_{u,v}\conv\, \argmax\{  \langle \alpha u^d- \beta v^d,X\rangle_F\colon X\in \Sym_d(\R^n),\,\rank X  = 1,\, \|X\|_F=1\}.
\end{align}

We now show that the condition~\eqref{eq:optcondition} cannot hold for tensors $\alpha u^d -\beta v^d$ admitting a unique best symmetric rank-one approximation. This is an interesting analogy to the fact that matrices achieving a minimal ratio of spectral and Frobenius norm have equal singular values.

\begin{prop}\label{prop:NoCriticalPoint}
Let $A=\alpha u^d -\beta v^d$ have rank two. If $A$
has a unique best symmetric rank-one approximation, then $A$ is not a critical point of the optimization problem~\eqref{eq:optproblem}.
\end{prop}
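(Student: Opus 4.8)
The plan is to use the uniqueness hypothesis to collapse the set-valued optimality condition \eqref{eq:optcondition} into a single tensor equation, and then to contradict that equation by exploiting the linear independence of $u$ and $v$. A unique best symmetric rank-one approximation means that $\argmax\{\langle A,X\rangle_F : \rank X = 1,\ \|X\|_F=1\}$ is a single tensor, which up to sign and scaling we write as $w^d$ with $\|w\|=1$ and $\abs{p_A(w)}=\|A\|_\sigma$. Its convex hull is then the singleton $\{w^d\}$, so \eqref{eq:optcondition} becomes
\[
\lambda\,(\alpha u^d-\beta v^d)=P_{u,v}\,w^d
\]
for some $\lambda\in\R$.

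Next I would unfold this equation. Since $u^d$ and $v^d$ lie in the range $V=\{u^{d-1}\delta u+v^{d-1}\delta v\}$ of $P_{u,v}$, both sides lie in $V$, and the equation is equivalent to testing it against the spanning tensors $u^{d-1}x$ and $v^{d-1}y$ of $V$. Using $\langle s^d,t^{d-1}x\rangle_F=\langle s,t\rangle^{d-1}\langle s,x\rangle$ together with $\|u\|=\|v\|=1$, and writing $c=\langle u,v\rangle$, $a=\langle w,u\rangle$, $b=\langle w,v\rangle$, this yields the two vector identities
\[
\lambda(\alpha u-\beta c^{d-1}v)=a^{d-1}w,\qquad \lambda(\alpha c^{d-1}u-\beta v)=b^{d-1}w.
\]

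I would then derive a contradiction by case analysis. If $\lambda=0$, both identities force $a=b=0$, whence $p_A(w)=\alpha a^d-\beta b^d=0=\|A\|_\sigma$, impossible for $A\neq0$. If $\lambda\neq0$, then because $u,v$ are linearly independent and $\alpha,\beta\neq0$, the two vectors $\alpha u-\beta c^{d-1}v$ and $\alpha c^{d-1}u-\beta v$ are both nonzero and, by the identities, both parallel to $w$, hence parallel to each other. But their coefficient matrix with respect to $\{u,v\}$ has determinant $\alpha\beta\,(c^{2(d-1)}-1)$, which is nonzero since $\alpha\beta\neq0$ and $\abs{c}<1$ (as $u,v$ are linearly independent unit vectors). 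This contradicts the optimality condition and proves the proposition.

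The contraction identities and the determinant computation are routine; I expect the step requiring the most care to be the first one, namely justifying rigorously that a unique best rank-one approximation reduces the convex hull in \eqref{eq:optcondition} to a single point. This involves the bookkeeping of the sign of $p_A(w)$ (absorbed into $\lambda$) and confirming that the maximizer may be treated within $\spa\{u,v\}$, so that $a,b$ are not both forced to vanish trivially; once the equation is reduced, the rest is short.
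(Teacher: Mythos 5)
Your proof is correct, and while it starts from the same point as the paper, it finishes by a genuinely different mechanism. Both arguments use uniqueness to collapse the convex hull in \eqref{eq:optcondition} to a singleton (with the sign of $p_A(w)$ absorbed into $\lambda$), arriving at $\lambda(\alpha u^d-\beta v^d)=P_{u,v}\,w^d$. From there the paper invokes its Lemma~\ref{lm:projection}, an explicit formula giving $P_{u,v}w^d=a\,u^{d-1}w+b\,v^{d-1}w$, and then appeals to the direct-sum decomposition $\{u^{d-1}\delta u\}\oplus\{v^{d-1}\delta v\}$ (asserted, not proved) to match components, concluding that $w$ would have to be a multiple of both $u$ and $v$. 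You instead dualize: since both sides lie in the range $V$ of $P_{u,v}$ and $\langle P_{u,v}w^d,z\rangle_F=\langle w^d,z\rangle_F$ for $z\in V$, pairing against $u^{d-1}x$ and $v^{d-1}y$ yields the two vector identities $\lambda(\alpha u-\beta c^{d-1}v)=a^{d-1}w$ and $\lambda(\alpha c^{d-1}u-\beta v)=b^{d-1}w$, and the contradiction comes from the determinant $\alpha\beta\,(c^{2(d-1)}-1)\neq 0$, guaranteed by $\alpha\beta\neq 0$ (rank two) and strict Cauchy--Schwarz $\abs{c}<1$ (linear independence of unit vectors). Your exclusion of $\lambda=0$ (there $a=b=0$ would force $p_A(w)=0=\|A\|_\sigma$) is the same in substance as the paper's observation that $P_{u,v}w^d\neq 0$. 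What your route buys: it needs neither the projection formula nor the direct-sum claim, both of which the paper leaves to the reader; everything reduces to the elementary contraction $\langle s^d,t^{d-1}x\rangle_F=\langle s,t\rangle^{d-1}\langle s,x\rangle$ and a $2\times2$ determinant, and it makes explicit exactly where the rank-two hypothesis enters. What the paper's route buys is a more geometric punchline ($w$ forced into $\spa\{u\}\cap\spa\{v\}$). The one point you rightly flagged as needing care is fine but deserves a sentence in a final write-up: for odd $d$ the maximizer of $\abs{p_A}$ on the sphere is only determined up to sign, yet $-w$ produces the same normalized rank-one tensor in the argmax set of \eqref{eq: subdifferential spec norm}, so the singleton reduction is legitimate; the worry about whether $w\in\spa\{u,v\}$ is, as you suspected, immaterial, since your identities hold in all of $\R^n$.
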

We use the following lemma that shows
$ P_{u,v} w^d=au^{d-1}w+bv^{d-1}w$ for any $w\in\R^n$ with some $a,b\in\R$.
\begin{lm}\label{lm:projection}
Let $\|u\|=\|v\|=1$. The projection $ P_{u,v}w^d$ is given by
\begin{align*}
   \frac{1}{1-\langle u,v \rangle^{2d-2}}\big[(\langle u,w \rangle^{d-1}-\langle u,v \rangle^{d-1}\langle v,w \rangle^{d-1})u^{d-1}
   +(\langle v,w \rangle^{d-1}-\langle u,v \rangle^{d-1}\langle u,w \rangle^{d-1})v^{d-1}\big]w.
\end{align*}

\end{lm}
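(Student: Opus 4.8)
The plan is to prove the formula directly from the variational characterization of the orthogonal projection: $P_{u,v}w^d$ is the unique element $z$ of $V=\{u^{d-1}\delta u+v^{d-1}\delta v\colon\delta u,\delta v\in\R^n\}$ for which the residual $w^d-z$ is orthogonal to all of $V$, i.e.\ $\langle w^d-z,\,u^{d-1}x\rangle_F=0$ and $\langle w^d-z,\,v^{d-1}y\rangle_F=0$ for every $x,y\in\R^n$. The whole computation rests on a few bilinear identities for symmetrized rank-one tensors, all of which follow by writing the symmetrization $u^{d-1}x$ as the average over the $d$ positions of the distinguished factor. Specifically, since $w^d$ is already symmetric one gets $\langle w^d,\,u^{d-1}x\rangle_F=\langle u,w\rangle^{d-1}\langle w,x\rangle$, and a short position-counting argument gives $\langle u^{d-1}x,\,u^{d-1}y\rangle_F=\tfrac1d\langle x,y\rangle+\tfrac{d-1}{d}\langle x,u\rangle\langle y,u\rangle$ together with the mixed analogue $\langle u^{d-1}x,\,v^{d-1}y\rangle_F=\tfrac1d\langle u,v\rangle^{d-1}\langle x,y\rangle+\tfrac{d-1}{d}\langle u,v\rangle^{d-2}\langle x,v\rangle\langle u,y\rangle$.

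With these in hand I would adopt the ansatz $z=a\,u^{d-1}w+b\,v^{d-1}w$ suggested by the statement and insert it into the two orthogonality conditions. Because each condition is a linear functional of the test vector $x$ (resp.\ $y$), requiring it to vanish identically turns it into a \emph{vector} equation; pairing these vector equations back against $u$ and $v$ collapses them to a $2\times2$ linear system for the scalars $a,b$ with coefficient matrix $\bigl(\begin{smallmatrix}1 & \langle u,v\rangle^{d-1}\\ \langle u,v\rangle^{d-1} & 1\end{smallmatrix}\bigr)$. Its determinant is exactly $1-\langle u,v\rangle^{2d-2}$, which produces the denominator in the claimed formula, while solving the system with right-hand side $(\langle u,w\rangle^{d-1},\langle v,w\rangle^{d-1})$ by Cramer's rule delivers the two bracketed numerators. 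Since $u,v$ are linearly independent (rank two) we have $\langle u,v\rangle^2<1$, so the matrix is nonsingular and $a,b$ are uniquely determined.

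The step I expect to be the main obstacle is verifying that this ansatz really captures the \emph{full} projection, i.e.\ that the residual is orthogonal to every direction $u^{d-1}x$ and $v^{d-1}y$ and not merely to the two test directions obtained by pairing with $u$ and $v$; equivalently, that the projection is genuinely forced into the two-dimensional span $\spa\{u^{d-1}w,\,v^{d-1}w\}$. To organize this I would split $\R^n=\spa\{u,v\}\oplus\spa\{u,v\}^\perp$ and treat variations $\delta u,\delta v$ in each summand separately. The identities above show that the two families decouple orthogonally, so $V$ splits as an orthogonal direct sum and the residual can be checked against each part in turn; on the complementary directions the Gram operator reduces precisely to the matrix $\bigl(\begin{smallmatrix}1 & \langle u,v\rangle^{d-1}\\ \langle u,v\rangle^{d-1} & 1\end{smallmatrix}\bigr)$, so the inversion above applies verbatim. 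The delicate part is confirming that the component along $\spa\{u,v\}$ is consistent with the same two-term form, and it is this reduction that both pins down the coefficients and establishes that no further terms are needed.
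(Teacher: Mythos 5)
Your setup and your three bilinear identities are correct, but the step you yourself flag as ``the main obstacle'' is exactly where the argument breaks, and it cannot be repaired in the form you propose. The two families do \emph{not} decouple orthogonally: your own mixed identity shows $\langle u^{d-1}x,v^{d-1}y\rangle_F\neq 0$ whenever $\langle u,v\rangle\neq0$, and even the diagonal Gram block is $\frac1d\bigl(I+(d-1)uu^{T}\bigr)$ rather than a multiple of $I$, so the normal equations form a $2n\times 2n$ system whose solution is generically \emph{not} of the form $a\,u^{d-1}w+b\,v^{d-1}w$. Concretely, inserting your ansatz into the orthogonality condition against all $u^{d-1}x$ and using your identities gives the vector equation
\begin{equation*}
\langle u,w\rangle^{d-1}w=\tfrac{a}{d}\bigl[w+(d-1)\langle u,w\rangle u\bigr]+\tfrac{b}{d}\bigl[c^{d-1}w+(d-1)c^{d-2}\langle u,w\rangle v\bigr],\qquad c=\langle u,v\rangle,
\end{equation*}
and after matching the component of $w$ orthogonal to $\spa\{u,v\}$, the components along $u$ and $v$ force $a(d-1)\langle u,w\rangle=0$ and $bc^{d-2}(d-1)\langle u,w\rangle=0$, which generically contradict the first equation. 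Pairing only against the test vectors $u$ and $v$, as you do, silently discards these binding conditions. In fact, read as the orthogonal projection onto the symmetrized subspace $V=\{u^{d-1}\delta u+v^{d-1}\delta v\}\subseteq\Sym_d(\R^n)$ with the Frobenius inner product, the claimed formula is false: take $d=3$, $u=e_1$, $v=e_2$, $w=e_1+e_2$ in $\R^2$; then $u^2u,u^2v,v^2u,v^2v$ span all of $\Sym_3(\R^2)$, so $P_{u,v}w^3=w^3$, whereas the formula returns $u^2w+v^2w$, whose associated polynomial is $(z_1^2+z_2^2)(z_1+z_2)\neq(z_1+z_2)^3$.

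What your $2\times2$ matrix $\bigl(\begin{smallmatrix}1&c^{d-1}\\ c^{d-1}&1\end{smallmatrix}\bigr)$ actually is, is the Gram matrix of the \emph{non-symmetrized} directions: $\langle u^{\otimes(d-1)}\otimes x,\,u^{\otimes(d-1)}\otimes y\rangle_F=\langle x,y\rangle$ and $\langle u^{\otimes(d-1)}\otimes x,\,v^{\otimes(d-1)}\otimes y\rangle_F=c^{d-1}\langle x,y\rangle$, with $\langle w^{\otimes d},u^{\otimes(d-1)}\otimes x\rangle_F=\langle u,w\rangle^{d-1}\langle w,x\rangle$. Projecting $w^{\otimes d}$ onto $\tilde V=\{u^{\otimes(d-1)}\otimes\delta u+v^{\otimes(d-1)}\otimes\delta v\}$ in the ambient space, the unknowns are then forced to be multiples of $w$, Cramer's rule applies exactly as you wrote, and symmetrizing yields precisely the lemma's formula; this is evidently the ``direct calculation'' behind the paper's one-line proof. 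This reading also suffices for the application: in the optimality condition the projected quantity is only ever paired with symmetric tensors $Z$, and since $\langle Z,\sym(S)\rangle_F=\langle Z,S\rangle_F$ for symmetric $Z$, one has $Z\perp V$ if and only if $Z\perp\tilde V$, so the direct-sum contradiction in Proposition~\ref{prop:NoCriticalPoint} goes through verbatim with $P_{\tilde V}$ (where $\delta u,\delta v$ come out proportional to $w$). So: redo your computation in $\tilde V$ and your argument closes immediately; as a computation of the orthogonal projection onto the symmetrized subspace it cannot be completed, because in that reading the stated formula does not hold.
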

\begin{proof}
This follows from the definition of orthogonal projection by a direct calculation.
\end{proof}

\begin{proof}[Proof of Proposition~\ref{prop:NoCriticalPoint}]
Let one of $\pm w^d$ be the normalized best symmetric rank-one approximation of~$A$. Since it is unique, the optimality condition becomes
\begin{equation}\label{eq: optimality new}
\lambda(\alpha u^d+\beta v^d)\in  P_{u,v} w^d.
\end{equation}
From $p_A(w)=\langle A, w^d\rangle_F\neq 0$ and
\(
A = \alpha u^d +\beta v^d\in \{u^{d-1}\delta u+v^{d-1}\delta v\colon \delta u,\delta v\in \R^n\}
\) we have $ P_{u,v}w^d\neq 0$, which excludes $\lambda = 0$. 
By Lemma~\ref{lm:projection}, $ P_{u,v}w^d= au^{d-1}w+bv^{d-1} w$ for some $a,b\in\R$. However, since $u$ and $v$ are linearly independent, we have the decomposition
\[
\{u^{d-1}\delta u+v^{d-1}\delta v\colon \delta u,\delta v\in \R^n\}=\{u^{d-1}\delta u\colon \delta u\in \R^n\}\oplus \{v^{d-1}\delta v\colon \delta v\in \R^n\}
\]
into two complementary subspaces.
Therefore,~\eqref{eq: optimality new} would only be possible if $w$ is both a multiple of $u$ and $v$, which contradicts the linear independence of $u$ and $v$.
\end{proof}

\subsection{A condition for unique symmetric best rank-one approximation}\label{sec:condition}

We now present a class of symmetric rank-two tensors admitting unique best symmetric rank-one approximations. By the result of Proposition~\ref{prop:NoCriticalPoint} these can then be excluded from the further discussion on the minimal norm ratio.

\begin{prop}\label{prop: number of maximizer}
Let
\[
A=\alpha u^d-\beta v^d
\]
with $u\neq v$, $\|u\|=\|v\|=1$, $\langle u,v\rangle \geq 0$ and $\alpha> \beta>0$. Then $A$ has exactly
one best symmetric rank-one approximation. 
\end{prop}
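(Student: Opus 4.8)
The plan is to reduce the problem to a single trigonometric function of one angle and to count the maximizers of its modulus. Since $u\neq v$ are unit vectors, they are linearly independent, and we may restrict to $\Sym_d(\R^2)=\Sym_d(\spa\{u,v\})$. Choosing orthonormal coordinates with $u=e_1$ and $v=\cos\phi\,e_1+\sin\phi\,e_2$, where $\phi=\arccos\langle u,v\rangle\in(0,\pi/2]$, a unit direction $w=\cos\theta\,e_1+\sin\theta\,e_2$ gives $\langle u,w\rangle=\cos\theta$ and $\langle v,w\rangle=\cos(\theta-\phi)$, so that
\[
p_A(w)=f(\theta):=\alpha\cos^d\theta-\beta\cos^d(\theta-\phi).
\]
The best symmetric rank-one approximations of $A$ correspond exactly to the maximizers of $\abs{f}$; since $w$ and $-w$ span the same line and $f(\theta+\pi)=(-1)^df(\theta)$, it suffices to show $\abs{f}$ has a unique maximizer modulo $\pi$, and we use the representatives with $\cos\theta\ge 0$.

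First I would show the maximum of $\abs{f}$ is attained where $f>0$, i.e.\ at the peak aligned with the dominant term $\alpha u^d$. The reflection $\theta\mapsto\phi-\theta$ interchanges the two summands and yields the identity
\[
f(\theta)+f(\phi-\theta)=(\alpha-\beta)\bigl(\cos^d\theta+\cos^d(\theta-\phi)\bigr).
\]
If some maximizer $\theta_1$ had $f(\theta_1)<0$, then $\cos^d(\theta_1-\phi)>0$ while $\cos^d\theta_1\ge 0$, so the right-hand side is positive and $f(\phi-\theta_1)>-f(\theta_1)=\abs{f(\theta_1)}$, contradicting maximality. Hence $\max\abs{f}=\max f\ge f(0)=\alpha-\beta\langle u,v\rangle^d>0$.

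Next I would localize the maximizer to the arc $\theta\in(\phi-\pi/2,0]$. A maximizer has $\cos\theta>0$ (otherwise $\alpha\cos^d\theta$ cannot help and one is easily beaten by $f(0)$). On the subarc where $\cos(\theta-\phi)<0$ the Gram relation $\langle u,w\rangle^2-2\langle u,v\rangle\langle u,w\rangle\langle v,w\rangle+\langle v,w\rangle^2=\sin^2\phi$ together with $\langle u,v\rangle\ge 0$ and $\langle u,w\rangle\langle v,w\rangle\le 0$ gives $\langle u,w\rangle^2+\langle v,w\rangle^2\le\sin^2\phi$; combined with the elementary bound $a^m+b^m\le(a+b)^m$ for $m=d/2\ge1$ this forces $\abs{f}<\alpha\sin^d\phi\le\alpha-\beta\langle u,v\rangle^d=f(0)$, excluding it. This is where the hypothesis $\langle u,v\rangle\ge0$ enters. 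On the remaining arc both cosines are positive, and from the difference identity $f(\phi-\theta)-f(\theta)=(\alpha+\beta)(\cos^d(\theta-\phi)-\cos^d\theta)$ the reflection strictly increases $f$ whenever $\theta>\phi/2$; since moreover $f$ is strictly decreasing on $(0,\phi)$, the maximizer must lie in $(\phi-\pi/2,0]$.

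Finally I would prove uniqueness on that arc by analyzing $f'(\theta)=-d\,[\alpha g(\theta)-\beta g(\theta-\phi)]$ with $g(\theta)=\cos^{d-1}\theta\sin\theta$. On $(\phi-\pi/2,0)$ both $g(\theta)$ and $g(\theta-\phi)$ are negative, so critical points solve $T(\theta):=g(\theta)/g(\theta-\phi)=\beta/\alpha$; using $\tfrac{d}{d\theta}\log\abs{g}=\cot\theta-(d-1)\tan\theta$ one checks $T'/T<0$, so $T$ decreases strictly from $+\infty$ to $0$ and meets $\beta/\alpha\in(0,1)$ exactly once. This unique critical point is the global maximizer, so $A$ has exactly one best symmetric rank-one approximation; the orthogonal case $\langle u,v\rangle=0$, where the arc degenerates, is checked directly. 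I expect the main obstacle to be precisely this last step: controlling the number of critical points of $f$ amounts to tracking the signs of $g(\theta)$ and $g(\theta-\phi)$ across the subintervals and across the parity of $d$, and the reduction to $\cos\theta\ge 0$ together with the strict monotonicity of $T$ are the decisive simplifications that make the count come out to one.
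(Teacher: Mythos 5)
Your proof is correct, but it reaches uniqueness by a genuinely different mechanism than the paper. The shared core is the reflection symmetry: your map $\theta\mapsto\phi-\theta$ is exactly the paper's orthogonal involution $Q$ swapping $u$ and $v$ (Lemma~2.5 in the source, which shows a maximizer satisfies $\abs{\langle u,w\rangle}\ge\abs{\langle v,w\rangle}$), and your two reflection identities extract the same information, in fact somewhat more, since you also pin down the sign of $f$ at the maximizer and localize it to the arc $(\phi-\pi/2,0]$. Where you diverge is the counting step. The paper works in polynomial coordinates, reduces the critical-point equation to $x=(bx-a)(ax+b)^{d-1}$, and counts \emph{all} real roots via sign changes of the derivative plus a parity argument (two roots for $d$ even, three for $d$ odd), then eliminates the spurious ones using the position lemma and, for $d$ odd, the single zero of $p_A$. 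You instead localize first and then prove a single-crossing property: on the arc, critical points solve $T(\theta)=g(\theta)/g(\theta-\phi)=\beta/\alpha$ with $g(\theta)=\cos^{d-1}\theta\sin\theta$, and since $h(\theta)=\cot\theta-(d-1)\tan\theta$ has $h'=-\csc^2\theta-(d-1)\sec^2\theta<0$ on $(-\pi/2,0)$, indeed $(\log\abs{T})'=h(\theta)-h(\theta-\phi)<0$, so $T$ decreases strictly from $+\infty$ to $0$ and meets $\beta/\alpha$ once. This buys you a parity-free argument and avoids the paper's case distinction between $d$ even and odd; the price is that two boundary situations must be settled separately, and both do check out: in the orthogonal case $\phi=\pi/2$ one has $\abs{f(\theta)}\le\alpha\abs{\cos\theta}^d+\beta\abs{\sin\theta}^d<\alpha=f(0)$ for $\sin\theta\neq 0$, giving uniqueness directly (note the paper's own remark that ``$y=0$ is not possible since both $a$ and $b$ are nonzero'' tacitly assumes $b>0$, i.e.\ $\langle u,v\rangle>0$, so your explicit deferral of this case is if anything cleaner), and the arc endpoint $\theta=0$ is not critical since $f'(0)=-d\beta\cos^{d-1}\phi\sin\phi\neq 0$ for $\phi\in(0,\pi/2)$, so the global maximizer must be the unique interior critical point. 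You should spell out these two checks in a final write-up, but they are routine and your argument stands as a complete alternative to the paper's root-counting proof.
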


For the proof we require auxiliary results. One is the following fact about polynomials.
\begin{lm}\label{lm: number of critical p}
Let $a, \gamma >0$ and $b \geq0$ and $d\geq 2$. The equation $x=\gamma (x-a)(x+b)^{d-1}$ has two real solutions if $d$ is even, and three real solutions if $d$ is odd.
\end{lm}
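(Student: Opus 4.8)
The plan is to count the real roots of $x=\gamma(x-a)(x+b)^{d-1}$ by rewriting the equation as an intersection of two curves whose shapes are easy to control. Since $a>0$ and $b\ge 0$, the value $x=-b$ solves the equation only when $b=0$, so I would first assume $b>0$ and treat $b=0$ at the end. For $b>0$ every solution satisfies $x\neq -b$, and dividing by $(x+b)^{d-1}$ turns the equation into the equivalent form $R(x)=L(x)$ with
\[
R(x)=\frac{x}{(x+b)^{d-1}},\qquad L(x)=\gamma(x-a).
\]
I would then study $D(x)=R(x)-L(x)$ separately on $(-b,\infty)$ and $(-\infty,-b)$, the key structural remark being that the parity of $d$ only influences the behaviour on the left interval, where the sign of $(x+b)^{d-1}$ depends on it.

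On $(-b,\infty)$ the factor $(x+b)^{d-1}$ is positive, so the analysis is parity-independent. A short computation gives $R'(x)=\bigl(b-(d-2)x\bigr)(x+b)^{-d}$ and $R''(x)=(d-1)\bigl((d-2)x-2b\bigr)(x+b)^{-d-1}$; from these one reads off that $R'$ is positive and strictly decreasing from $+\infty$ (as $x\to -b^{+}$) to $0$ at $x_m=b/(d-2)$, and is negative for $x>x_m$ (for $d=2$ one has $x_m=+\infty$, i.e.\ $R$ increases on all of $(-b,\infty)$). This is the crucial point: because $R'$ runs monotonically through every positive value exactly once and is negative afterwards, the equation $R'(x)=\gamma$ has a \emph{unique} root in $(-b,\infty)$ for every $\gamma>0$. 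Hence $D'=R'-\gamma$ changes sign exactly once, from $+$ to $-$, so $D$ is unimodal there. Since $D(x)\to-\infty$ as $x\to -b^{+}$, $D(a)=R(a)=a(a+b)^{1-d}>0$, and $D(x)\to-\infty$ as $x\to\infty$, unimodality forces exactly two (simple) roots on $(-b,\infty)$, one in $(-b,a)$ and one in $(a,\infty)$.

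It remains to inspect $(-\infty,-b)$. If $d$ is even, then $(x+b)^{d-1}<0$ there, so $R(x)>0>L(x)$ and $D$ has no zero; thus all roots lie in $(-b,\infty)$ and there are exactly two. If $d$ is odd, then $(x+b)^{d-1}>0$, and on this interval $R$ is negative and (again by the sign of $R'$) strictly decreasing while $L$ is increasing, so $D=R-L$ decreases strictly from $+\infty$ (as $x\to-\infty$) to $-\infty$ (as $x\to -b^{-}$) and has exactly one simple root. Together with the two roots on the right this gives three real solutions, which proves the claim for $b>0$.

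Finally, for $b=0$ I would factor $h(x)=\gamma(x-a)x^{d-1}-x=x\bigl(\gamma(x-a)x^{d-2}-1\bigr)$, isolating the simple root $x=0$; the remaining roots solve $x^{-(d-2)}=\gamma(x-a)$, to which the same monotonicity argument applies and yields one more root when $d$ is even and two when $d$ is odd, so the totals $2$ and $3$ persist. The main obstacle is the upper bound on the number of roots: the lower bounds are immediate from the intermediate value theorem applied to the sign data above, and the one genuinely load-bearing observation is that $R'(x)=\gamma$ has a single solution on $(-b,\infty)$, which collapses what could a priori be many crossings into an at-most-unimodal comparison.
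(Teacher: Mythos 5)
Your proof is correct, but it takes a genuinely different route from the paper's. The paper works directly with the polynomial $p(x)=\gamma(x-a)(x+b)^{d-1}-x$: the intermediate value theorem gives at least two real zeros (one in $[-b,0]$, one in $(a,\infty)$); the factored derivative $p'(x)=\gamma d(x+b)^{d-2}\bigl(x-\tfrac{(d-1)a-b}{d}\bigr)-1$ has at most two sign changes, so $p$ has at most three real zeros; and the exact counts then drop out of the algebraic fact that the number of real zeros of a real polynomial has the same parity as its degree, here $d$. You instead divide by $(x+b)^{d-1}$, trading the polynomial for an intersection problem between the rational curve $R(x)=x(x+b)^{1-d}$ and the line $L(x)=\gamma(x-a)$, and you count crossings by a unimodality argument on each side of the pole at $-b$; note that the two formulations carry the same information, since $p(x)=-(x+b)^{d-1}\bigl(R(x)-L(x)\bigr)$. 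The approaches distribute the work differently. The paper's parity trick makes the proof very short and uniform in $b$, but it leaves opaque where the parity of $d$ actually enters. Your version requires the pole analysis and the separate $b=0$ case, but it is strictly more informative: it shows all roots are simple, locates them (one in $(-b,a)$, one in $(a,\infty)$, and for odd $d$ one in $(-\infty,-b)$), and makes transparent that the third root for odd $d$ comes precisely from the sign of $(x+b)^{d-1}$ to the left of $-b$. Your load-bearing observation --- that $R'(x)=\gamma$ has exactly one solution on $(-b,\infty)$, because $R'$ decreases strictly from $+\infty$ through $0$ at $x_m=b/(d-2)$ and is negative thereafter --- is a valid substitute for the paper's sign-change count for $p'$, and I verified the supporting computations ($R'$, $R''$, the limits of $D$ at the endpoints, $D(a)>0$, and the $b=0$ factorization) all check out.
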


\begin{proof}
Let $p(x)=\gamma (x-a)(x+b)^{d-1}-x$. Then by the intermediate value theorem, $p$ must have at least two real zeros, namely one in the interval $[-b,0]$ and another one in the interval $(a,\infty)$. 
 On the other hand,  
 \[
 p'(x)= \gamma d(x+b)^{d-2}\left(x-\frac{(d-1)a-b}{d}\right)-1,
 \]
 has at most two sign changes, one at a value larger than $\frac{(d-1)a-b}{d}$ and another at one at a value smaller than $-b$ if $d$ is odd. Therefore, $p$ has at most three real zeros. 
The statement follows from the fact that the number of real zeros of a polynomial with real coefficients has the same parity as its degree.
\end{proof}

The second lemma narrows the possible locations of maximizers of the homogeneous form~$\abs{p_A}$.
 
\begin{lm}\label{lm:position_maximizer}
Under the assumptions of Proposition~\ref{prop: number of maximizer}, let $w$ be a maximizer of $\abs{p_A(w)} =  \abs{\langle\alpha u^d-\beta v^d,w^d \rangle_F}$ subject to $\|w\|=c>0$. Then $\abs {\langle u,w\rangle} \geq \abs{\langle v,w\rangle}$.
\end{lm}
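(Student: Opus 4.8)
The plan is to use an exchange (reflection) argument: I would reflect $w$ through the angle bisector of $u$ and $v$, which swaps the roles of the two inner products, and then show that this reflection strictly increases $\abs{p_A}$ whenever $\abs{\langle u,w\rangle}<\abs{\langle v,w\rangle}$. Since
\[
p_A(w)=\alpha\langle u,w\rangle^d-\beta\langle v,w\rangle^d
\]
depends on $w$ only through $s:=\langle u,w\rangle$ and $t:=\langle v,w\rangle$, I abbreviate $a=s^d$ and $b=t^d$, so that $p_A(w)=\alpha a-\beta b$.

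First I would construct the relevant reflection. Because $\langle u,v\rangle\geq0$ we have $u\neq-v$, and since $u\neq v$ the vectors $u,v$ are linearly independent and span a plane. Let $R$ be the orthogonal reflection of $\R^n$ that fixes the bisector direction $u+v$ together with the orthogonal complement of $\spa\{u,v\}$, and acts as $-1$ on the line in $\spa\{u,v\}$ perpendicular to $u+v$. Then $R$ is symmetric and orthogonal with $R^2=I$, and by the symmetry of $u$ and $v$ about $u+v$ it satisfies $Ru=v$ and $Rv=u$. Setting $w'=Rw$ gives $\|w'\|=\|w\|=c$ and, using $R^\top=R$,
\[
\langle u,w'\rangle=\langle Ru,w\rangle=\langle v,w\rangle=t,\qquad \langle v,w'\rangle=\langle Rv,w\rangle=\langle u,w\rangle=s,
\]
so that $p_A(w')=\alpha t^d-\beta s^d=\alpha b-\beta a$.

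Next I would compare the two competitors by squaring, which sidesteps any case distinction on the parity of $d$ and on the signs of $s,t$. A direct factorization gives the identity
\[
p_A(w')^2-p_A(w)^2=(\alpha b-\beta a)^2-(\alpha a-\beta b)^2=(\alpha^2-\beta^2)(b^2-a^2).
\]
Now suppose, for contradiction, that $\abs{s}<\abs{t}$. Since $r\mapsto r^d$ is strictly increasing in $\abs{r}$, this yields $\abs{a}<\abs{b}$, hence $b^2-a^2>0$; together with $\alpha>\beta>0$ the right-hand side is strictly positive. Thus $\abs{p_A(w')}>\abs{p_A(w)}$ while $\|w'\|=c$, contradicting the maximality of $w$. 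Therefore $\abs{\langle u,w\rangle}\geq\abs{\langle v,w\rangle}$, as claimed.

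I expect the only real subtlety to be the construction of $R$ and the verification that it exchanges the two inner products for an arbitrary $w\in\R^n$ (not merely for $w\in\spa\{u,v\}$); this is precisely where the hypothesis $\langle u,v\rangle\geq0$ enters, since it guarantees $u+v\neq0$ so that the bisector direction is well defined. The comparison step is then painless: passing to squared quantities in the difference-of-squares identity removes all dependence on the signs of $s,t$ and on the parity of $d$, which is what allows a single uniform argument covering both the even and odd cases.
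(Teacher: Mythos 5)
Your proof is correct and takes essentially the same route as the paper: the paper's proof uses the identical exchange idea, applying the symmetric orthogonal involution $Q$ with $Qu=v$, $Qv=u$ to $w$ and contradicting the maximality of $w$. The only difference is in the final comparison, where the paper normalizes $\langle v,w\rangle>0$ and splits into two cases according to the sign of $p_A(w)$, while your difference-of-squares identity $p_A(Rw)^2-p_A(w)^2=(\alpha^2-\beta^2)\bigl(\langle v,w\rangle^{2d}-\langle u,w\rangle^{2d}\bigr)$ handles all signs and both parities of $d$ at once --- a mild streamlining of the same argument.
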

\begin{proof}
Assume to the opposite that $\abs{\langle u,w\rangle} < \abs{\langle v,w\rangle}$ and without loss of generality $\langle v,w\rangle>0$. Let $Q$ be the symmetric orthogonal matrix mapping $u$ to $v$ and $v$ to $u$ (i.e. $Q = I - z z^T$ with $z = (u+v)/\| u+v\|$), and let $\bar{w}=Qw$. Then $\langle u,w\rangle=\langle v,\bar{w}\rangle$ and $\langle v,w\rangle=\langle u,\bar{w}\rangle$. By assumption, we then have
\[
 \abs{\langle\alpha u^d-\beta v^d,\bar{w}^d \rangle_F}=\langle\alpha u^d-\beta v^d,\bar{w}^d \rangle_F.
\]
If $\abs{\langle\alpha u^d-\beta v^d,{w}^d \rangle_F}=\langle\alpha u^d-\beta v^d,{w}^d\rangle_F$ this yields $\abs{\langle\alpha u^d-\beta v^d,\bar{w}^d \rangle_F}>\abs{\langle\alpha u^d-\beta v^d,{w}^d \rangle_F}$ (by using $(\alpha+\beta)\langle v,w\rangle^d>(\alpha+\beta)\langle u, w\rangle ^d$) which contradicts the optimality of $w$. In the other case, $\abs{\langle\alpha u^d-\beta v^d,{w}^d \rangle_F}= - \langle\alpha u^d-\beta v^d,{w}^d\rangle_F$, optimality implies $\beta(\langle u,w\rangle^d+\langle v,w\rangle^d)>\alpha(\langle u,w\rangle^d+\langle v,w\rangle^d)$ which contradicts $\alpha > \beta$.
\end{proof}

We are now in the position to prove Proposition~\ref{prop: number of maximizer}.

\begin{proof}[Proof of Proposition~\ref{prop: number of maximizer}]
We can assume that $A \in \Sym_d(\R^2)$, so that $u,v \in \R^2$.
Without loss of generality, since we can change coordinates, we can consider $\alpha=1$, $u=\begin{pmatrix}0\\1 \end{pmatrix}$ and $\sqrt[d]{\beta}v=\begin{pmatrix}a\\b\end{pmatrix}$ with $a>0$, $b\geq0$ (since $\langle u,v\rangle \geq 0$),  and $a^2+b^2<1$ (since $\beta<\alpha=1$). Writing $w = \lambda \begin{pmatrix}x\\y\end{pmatrix}$ for points on the unit circle, where $\lambda > 0$ is a normalization constant, we then have
\begin{equation}\label{eq: p_Aw}
p_A(w) = \lambda^d[y^d - (ax + b y)^d].
\end{equation}
Critical points on the circle are characterized by $\langle w, \nabla p_A(w) \rangle = 0$, which means
\[
y^{d-1}x-(bx-ay)(ax+by)^{d-1} = 0
\]
independent of $\lambda$. Note that here $y=0$ is not possible since both $a$ and $b$ are nonzero. 
Recall that a symmetric best rank-one approximation of $A$ is given as $p_A(w)w^d$, where $w$ maximizes $\abs{p_A(w)}$ on the circle.
Since $p_A(-w)=(-1)^dp_A(w)$, in order to prove the assertion it suffices to show that $\abs{p_A(w)}$ has exactly one maximizer $w$ with $y=1$.
The optimality condition at such a $w$ reduces to
\begin{equation}\label{eq:critpoint}
x = (bx-a)(ax+b)^{d-1}.
\end{equation}
Hence, we only need to show that there is exactly one solution $x$ of this equation corresponding to a global maximum of $\abs{p_A}$ on the unit circle.

 If $y=1$, then $p_A$ in~\eqref{eq: p_Aw} has a zero at $x_0=\frac{1-b}{a}$. Then
\[
x_0=\frac{1-b}{a}>\frac{b-b^2-a^2}{a}= (bx_0-a)(ax_0+b)^{d-1}.
\]
This shows that~\eqref{eq:critpoint} has at least one solution $x^*>x_0$. We consider such a solution $x^*$ such that the corresponding unit vector $w=\lambda\begin{pmatrix}
x^*\\1
\end{pmatrix}$ is a local maximum of $\abs{p_A}$ on the unit circle. We have
\[
\abs{\langle u,w\rangle}=\lambda <\frac{\lambda}{\sqrt[d]{\beta}}=  \frac{\lambda}{\sqrt[d]{\beta}}(ax_0-b) <\lambda \frac{1}{\sqrt[d]{\beta}}(ax^*-b) =\abs{\langle v,w\rangle}.
\]
By Lemma~\ref{lm:position_maximizer}, $w$ is not a global maximum of $\abs{p_A}$. If $d$ is even, then by Lemma~\ref{lm: number of critical p} equation~\eqref{eq:critpoint} has exactly two solutions and therefore only one corresponds to a global maximum. If $d$ is odd, then by the same lemma~\eqref{eq:critpoint} has three solutions. Taking into account that $p_A$ in~\eqref{eq: p_Aw} has only one zero for $y=1$, one of these solutions corresponds to a local minimizer of $\abs{p_A}$. Hence, there is only one global maximizer.
\end{proof}

\subsection{The case $\alpha> 0\geq \beta$}\label{sec:caseSum}
We show that $\frac{\|\alpha u^d-\beta v^d\|_\sigma^2}{\|\alpha u^d-\beta v^d\|_F^2} \geq\frac{1}{2}$ if $\langle u^d,v^d\rangle_F\geq 0$ and $\alpha > 0 \ge \beta$. This shows that for $d> 2$ such tensors do not attain the infimum in~\eqref{eq:optproblem} since $\frac{1}{2}> \left(1-\frac{1}{d}\right)^{d-1} $. We formulate this statement without $\alpha$ and $\beta$ by removing the restriction $\| u \| = \| v \| = 1$.

\begin{prop}\label{lm:sum}
Let $u \neq v$ and $\langle u,v \rangle\geq0$. Then $\frac{\|u^d+v^d\|^2_\sigma}{\|u^d+v^d\|^2_F}\geq \frac{1}{2}$.
\end{prop}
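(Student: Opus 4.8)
The plan is to bound the spectral norm from below by testing the associated form against a single well-chosen unit vector, and then to reduce the claim to an elementary algebraic inequality in the quantities $\|u\|$, $\|v\|$ and $\langle u,v\rangle$. Since the statement is invariant under exchanging $u$ and $v$, I would first assume without loss of generality that $\|u\|\ge\|v\|$.

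Writing $s=\langle u,v\rangle\ge 0$ and $A=u^d+v^d$, one has $\|A\|_F^2=\|u\|^{2d}+2s^d+\|v\|^{2d}$. For the spectral norm I would use the characterization $\|A\|_\sigma=\max_{\|w\|=1}\abs{\langle u,w\rangle^d+\langle v,w\rangle^d}$ together with the test vector $w=u/\|u\|$, which gives $\langle A,w^d\rangle_F=\|u\|^d+s^d/\|u\|^d$. Because $s\ge 0$ this value is nonnegative, so it is a genuine lower bound, $\|A\|_\sigma\ge \|u\|^d+s^d/\|u\|^d$. This is precisely the step where the hypothesis $\langle u,v\rangle\ge 0$ enters: it guarantees that the two contributions add constructively rather than cancel, which matters in particular for odd $d$.

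Squaring this bound and multiplying by two, the target inequality $2\|A\|_\sigma^2\ge\|A\|_F^2$ reduces to
\[
\|u\|^{2d}-\|v\|^{2d}+2s^d+\frac{2s^{2d}}{\|u\|^{2d}}\ge 0,
\]
which holds at once because $\|u\|^{2d}\ge\|v\|^{2d}$ under the normalization above and the remaining two terms are nonnegative. Dividing through by $\|A\|_F^2$ then yields $\|A\|_\sigma^2/\|A\|_F^2\ge\tfrac12$, as claimed.

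The only genuine decision in the argument is the choice of test vector; everything after it is routine. The main obstacle, to the extent there is one, is the sign bookkeeping, namely ensuring that the evaluated form is nonnegative before squaring. Selecting the normalized direction of the larger-norm summand together with the sign condition $s\ge 0$ resolves this cleanly and makes the reduction transparent. One can note in passing that the bound $\tfrac12$ is approached for orthonormal $u,v$ of equal length, consistent with the matrix value recovered at $d=2$.
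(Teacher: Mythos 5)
Your proof is correct and follows essentially the same route as the paper: both bound $\|u^d+v^d\|_\sigma$ from below by pairing with the test vector $u/\|u\|$ (after normalizing $\|u\|\ge\|v\|$, with $\langle u,v\rangle\ge 0$ ensuring nonnegativity) and then reduce to the same elementary inequality in $\|u\|,\|v\|,\langle u,v\rangle$, merely arranged with cleared denominators instead of the paper's $1-{}$fraction form. The only quibble is your closing remark: for orthonormal $u,v$ the ratio equals $\tfrac12$ exactly, so the bound is attained rather than merely approached.
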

\begin{proof}
We can assume $\|u\|\geq\|v\|$. Using that  $\|u^d+v^d\|_\sigma\geq \langle u^d+v^d,\frac{u^d}{\|u\|^d}\rangle_F$, we have
\begin{align*}
    \frac{\|u^d+v^d\|^2_\sigma}{\|u^d+v^d\|^2_F}
    \geq \frac{\|u\|^{2d}+2\langle u,v\rangle^d+\left(\frac{\langle u,v\rangle }{\|u\|}\right)^{2d}}
    {\|u\|^{2d}+2\langle u,v\rangle^d+\|v\|^{2d}}
    =1-\frac{\|v\|^{2d}-\left(\frac{\langle u,v\rangle }{\|u\|}\right)^{2d}} 
  {\|u\|^{2d}+2\langle u,v\rangle^d+\|v\|^{2d}}
  &\geq
  1-\frac{\|v\|^{2d}}{2\|v\|^{2d}}=\frac{1}{2},
\end{align*}
as asserted.
\end{proof}

\subsection{The case $\alpha=\beta>0$}\label{sec:caseEqual}

In this section we verify by a direct calculation that the infimum in~\eqref{eq:optproblem} is not attained for the difference of two rank-one tensors with the same norm, i.e. when $\alpha= \beta$ in~\eqref{eq:optproblem}.
\begin{prop}\label{prop:diffequal}
Let $u\neq v$, $\|u\|=\|v\|\neq 0$, $\langle u,v\rangle\geq 0$ and $d\geq 3$. Then 
\[
\frac{\|u^d-v^d\|_\sigma^2}{\|u^d-v^d\|^2_F}> \left(1-\frac{1}{d}\right)^{d-1}.
\]
\end{prop}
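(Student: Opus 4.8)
The plan is to reduce the statement to a one–variable problem and to lower–bound the spectral norm by evaluating $p_A$ at judiciously chosen test vectors. First I would restrict to $\Sym_d(\spa\{u,v\})\cong\Sym_d(\R^2)$ and normalize $\|u\|=\|v\|=1$, writing $A=u^d-v^d$ and $c=\langle u,v\rangle\in[0,1)$ (the value $c=1$ being excluded by $u\neq v$). Using $\langle u^d,v^d\rangle_F=c^d$ one gets immediately
\[
\|u^d-v^d\|_F^2=2(1-c^d),
\]
so the claim is equivalent to the scalar inequality $\|u^d-v^d\|_\sigma^2>2(1-c^d)(1-1/d)^{d-1}$ for every $c\in[0,1)$.

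For the spectral norm I would introduce the coordinates $m=\langle u,w\rangle$, $n=\langle v,w\rangle$ of a competitor $w$. Since $w$ lies in $\spa\{u,v\}$, the constraint $\|w\|=1$ translates into $m^2-2cmn+n^2=1-c^2$, and
\[
\|u^d-v^d\|_\sigma=\max\{\,|m^d-n^d|\colon m^2-2cmn+n^2=1-c^2\,\}.
\]
Equivalently, writing $c=\cos2\theta$ and $w=\cos\phi\,p+\sin\phi\,q$ in the orthonormal frame $p=(u+v)/\|u+v\|$, $q=(u-v)/\|u-v\|$, one has $m=\cos(\phi-\theta)$ and $n=\cos(\phi+\theta)$, so that $\|A\|_\sigma=\max_\phi|\cos^d(\phi-\theta)-\cos^d(\phi+\theta)|$.

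Any admissible $w$ yields a lower bound, and the strategy is to produce bounds strong enough for all $c$ yet asymptotically sharp as $c\to1$. The difficulty is that no single direction works uniformly: the maximizer migrates from $w=u$ at $c=0$ (giving ratio $1/2$) to the direction optimal for $W_d$, namely $\sin^2\phi=1/d$, as $c\to1$, and a fixed $\phi$ is far from optimal at the opposite end. I would therefore cover the parameter range by two explicit test vectors. For $c$ not too close to $1$, the choice $w=u$ gives $p_A(u)=1-c^d$, hence ratio at least $(1-c^d)/2$, which already exceeds $(1-1/d)^{d-1}$ as soon as $c^d<1-2(1-1/d)^{d-1}$; note $1-2(1-1/d)^{d-1}>0$ because $(1-1/d)^{d-1}<\tfrac12$ for all $d\ge3$. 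For $c$ close to $1$, where $u^d-v^d$ is, after scaling, close to $W_d$, I would use the direction with $\sin^2\phi=1/d$ to obtain the asymptotically correct bound, which tends to $(1-1/d)^{d-1}$ exactly as $c\to1$ and stays strictly above it for $c<1$. The two ranges overlap, so together they give the strict inequality on all of $[0,1)$, with the infimum $(1-1/d)^{d-1}$ only approached in the limit.

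The main obstacle is precisely this absence of a closed form for the maximizer together with its motion across the parameter range. Concretely, the work lies in verifying the scalar inequality in the near-$1$ regime uniformly in $d$: both the numerator $[\cos^d(\phi-\theta)-\cos^d(\phi+\theta)]^2$ and $\|A\|_F^2=2(1-c^d)$ vanish as $c\to1$, and one must check that they do so at exactly matched leading order, so that the ratio decreases to $(1-1/d)^{d-1}$ yet is never equal to it for $c<1$. A cleaner alternative, which I would attempt in parallel, is to show directly that $\theta\mapsto\|A\|_\sigma^2/\|A\|_F^2$ is strictly increasing on $(0,\pi/4]$; by the envelope theorem its derivative can be expressed through the maximizer alone, reducing monotonicity to a single trigonometric inequality, but signing that expression without an explicit maximizer is the delicate point.
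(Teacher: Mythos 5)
Your plan coincides structurally with the paper's own proof. After the same reduction to $\Sym_d(\R^2)$, the paper parametrizes $u=(1,t)^T$, $v=(1,-t)^T$ with $t\in(0,1]$ (so your $c=(1-t^2)/(1+t^2)$), uses the test vector $u/\|u\|$ to handle $t\geq\sqrt{1/(d-1)}$ --- the analogue of your $w=u$ regime --- and the fixed vector $\bigl(\sqrt{(d-1)/d},\,1/\sqrt{d}\bigr)$, i.e.\ exactly your direction with $\sin^2\phi=1/d$, near the degenerate end. Your computation $\|u^d-v^d\|_F^2=2(1-c^d)$, the threshold $c^d<1-2(1-1/d)^{d-1}$ with the observation $(1-1/d)^{d-1}<\tfrac12$, and the fact that the two regimes overlap are all correct.

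The genuine gap is the sentence asserting that the second bound ``tends to $(1-1/d)^{d-1}$ exactly as $c\to1$ and stays strictly above it for $c<1$'': this is the entire technical content of the proposition, not an observation, and neither of your two proposed continuations supplies it. Checking that numerator and denominator vanish ``at exactly matched leading order'' only establishes the limit; it says nothing about the sign of the difference at finite $c$, and a priori the ratio could approach $(1-1/d)^{d-1}$ from below (so the strict inequality would fail). The paper settles precisely this point by proving that $t\mapsto h(t)^2/g(t)$ is strictly increasing on $\bigl(0,\sqrt{1/(d-1)}\bigr)$, where $h$ is the value at your test direction and $g=\|u^d-v^d\|_F^2$: after elementary but carefully arranged manipulations, $2h'g-g'h$ is rewritten as $\frac{1}{2t}\bigl[(b'-a')\int_a^b f\,dx-(b-a)\int_{a'}^{b'}f\,dx\bigr]$ with $f(x)=(d-1)x^{d-2}$ and explicit cubic-in-$t$ endpoints $a<a'<b'<b$, and positivity follows from a two-sided refinement of Jensen's inequality for nested intervals with shifted midpoints (Lemma~\ref{lm:conv}) together with monotonicity of $f$. (The same lemma also yields $d^d-(d-2)^d>2d(d-1)^{d-1}$, which is needed to close your first regime at its endpoint --- your threshold argument alone covers it, but the point is that some such inequality must be checked there too.) Your fallback via the envelope theorem runs into the obstacle you yourself name --- signing the derivative through an unknown, migrating maximizer --- so as it stands the proposal identifies the right two test vectors and the right regime split but leaves the decisive strict inequality unproven.
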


We require the following version of Jensen's inequality.
\begin{lm}\label{lm:conv}
Let $f:[a,b]\to \R$ be convex and continuously differentiable. If $a+b=a'+b'$ and $a<a'<b'<b$, then
\[
\frac{1}{b-a}\int_a^bf(x)\, dx\geq \frac{1}{b'-a'}\int_{a'}^{b'} f(x) \, dx\geq f\left(\frac{a+b}{2}\right).
\]
The inequalities are strict if $f$ is strictly convex.
\end{lm}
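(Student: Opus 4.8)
The plan is to exploit the fact that both intervals are symmetric about the common midpoint $m = (a+b)/2 = (a'+b')/2$, where the equality of midpoints is precisely the hypothesis $a+b=a'+b'$. Writing $a = m-R$, $b = m+R$ and $a' = m-r$, $b' = m+r$ with $R > r > 0$, the nested configuration $a < a' < b' < b$ is encoded in the single inequality $R > r$.

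First I would symmetrize $f$ about $m$ by setting $\phi(t) = \tfrac12\big(f(m+t) + f(m-t)\big)$ for $t \ge 0$. A change of variables folds each symmetric integral onto $[0,s]$: for any $s>0$ one has $\frac{1}{2s}\int_{m-s}^{m+s} f(x)\,dx = \frac{1}{s}\int_0^s \phi(t)\,dt$. Thus the lemma is equivalent to the two statements that the running average $h(s) = \tfrac1s \int_0^s \phi$ satisfies $h(R) \ge h(r)$ (the first inequality) and $h(r) \ge \phi(0) = f(m)$ (the second inequality).

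The key observation is that $\phi$ is nondecreasing on $[0,\infty)$: since $f$ is continuously differentiable and convex, $f'$ is nondecreasing, so $\phi'(t) = \tfrac12\big(f'(m+t) - f'(m-t)\big) \ge 0$ for $t \ge 0$. Both remaining statements are then elementary facts about the running average of a nondecreasing function. For the lower bound, $\phi(t) \ge \phi(0)$ for all $t \ge 0$ immediately yields $h(s) \ge \phi(0) = f(m)$. For the monotonicity of $h$, I would differentiate to obtain $h'(s) = \big(\phi(s) - h(s)\big)/s$ and note that $h(s) \le \phi(s)$ because $\phi$ is nondecreasing; hence $h' \ge 0$, giving $h(R)\ge h(r)$.

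Strictness fits into the same framework: if $f$ is strictly convex, then $f'$ is strictly increasing, so $\phi$ is strictly increasing on $(0,\infty)$, which upgrades both $\phi(t) \ge \phi(0)$ and $h(s) \le \phi(s)$ to strict inequalities on sets of positive measure, forcing $h(r) > f(m)$ and $h'(s) > 0$. I do not expect a serious obstacle here; the only genuine idea is the symmetrization, which collapses a Jensen-type bound and an interval-widening monotonicity into two transparent statements about a single monotone function and its running average, after which everything reduces to routine calculus.
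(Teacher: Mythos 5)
Your proof is correct, and it takes a recognizably different route from the paper's, though both rest on the same two ingredients: centering at the common midpoint $m=(a+b)/2=(a'+b')/2$ and the monotonicity of $f'$. The paper normalizes $a=-b$, $a'=-b'$ and compares the two averages in one shot via the substitution $x\mapsto (b/b')x$, writing the difference of averages as the explicit double integral $\frac{1}{b'}\int_0^{b'}\int_x^{bx/b'}\bigl(f'(y)-f'(-y)\bigr)\,dy\,dx\geq 0$, and then simply quotes Jensen's inequality for the second bound. You instead fold $f$ into the symmetrized function $\phi(t)=\tfrac12\bigl(f(m+t)+f(m-t)\bigr)$, note $\phi'(t)=\tfrac12\bigl(f'(m+t)-f'(m-t)\bigr)\geq 0$, and reduce both inequalities to elementary facts about the running average $h(s)=\tfrac1s\int_0^s\phi(t)\,dt$ of a nondecreasing function: $h(s)\geq\phi(0)=f(m)$ gives the Jensen half without citation, and $h'(s)=\bigl(\phi(s)-h(s)\bigr)/s\geq 0$ gives the interval-widening half. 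Your version buys a slightly stronger statement (the symmetric average is monotone in the half-width $s$ throughout, not merely compared at the two values $r<R$) and is self-contained, at the cost of introducing the auxiliary functions $\phi$ and $h$; the paper's substitution identity is more compact but yields only the two-point comparison. Your strictness argument ($f$ strictly convex and $C^1$ implies $f'$ strictly increasing, hence $\phi'>0$ on $(0,\infty)$, forcing $h(r)>f(m)$ and $h'>0$) is sound and parallels the paper's closing remark.
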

\begin{proof}
Without loss of generality let $a=-b$ and $a'=-b'$. Then using substitution we have
\begin{align*}
   \frac{1}{b}\int_{-b}^bf(x) \, dx&= \frac{1}{b'}\int_{-b'}^{b'} f\left(\frac{b}{b'}x\right)-f(x)+f(x)\,dx\\
   &=\frac{1}{b'}\int_{-b'}^{b'}f(x) \, dx+\frac{1}{b'}\int_0^{b'}\int_x^{\frac{bx}{b'}}f'(y)-f'(-y)\,dydx \geq\frac{1}{d}\int_{-b'}^{b'}f(x) \, dx,
\end{align*}
by monotonicity of the derivative of a convex function. This shows the first of the asserted inequalities. The second inequality is just Jensen's inequality, noting that $\frac{a+b}{2}=\frac{a'+b'}{2}$. If $f$ is strictly convex, then $f'$ is strictly monotone and the inequalities are strict.
\end{proof}

\begin{proof}[Proof of Proposition~\ref{prop:diffequal}]
We can assume that $A \in \Sym_d(\R^2)$, so that $u,v \in \R^2$.
After rotation and rescaling we have $u=\begin{pmatrix}1\\t\end{pmatrix}$ and $v=\begin{pmatrix}1\\-t\end{pmatrix}$ with $t\in (0,1].$ Then
\begin{equation}\label{eq:frobnormofud-vd}
\|u^d-v^d\|^2_F=2(1+t^2)^d-2(1-t^2)^d\eqqcolon g(t).
\end{equation}
First, we apply the estimate
\[
\|u^d-v^d\|_\sigma \geq \left\langle u^d-v^d,\frac{u^d}{\|u\|^d}\right\rangle_F=\frac{(1+t^2)^d-(1-t^2)^d}{\sqrt{1+t^2}^d},
\]
which yields
\[
\frac{\|u^d-v^d\|_\sigma^2}{\|u^d-v^d\|_F^2}\geq\frac{(1+t^2)^d-(1-t^2)^d}{2(1+t^2)^d}=\frac{1}{2}\left( 1-\left(\frac{1-t^2}{1+t^2}\right)^d\right).
\]
The right-hand side is monotonically increasing in the interval $(0,1]$. For $t=\sqrt{\frac{1}{d-1}}$ it equals
\[
\frac{1}{2}\left( 1-\left(\frac{d-2}{d}\right)^d\right)=\frac{d^d-(d-2)^d}{2d^d}.
\]
This value is larger than $\left( 1-\frac{1}{d}\right)^{d-1}=\left( \frac{d-1}{d}\right)^{d-1}$ since, using Lemma~\ref{lm:conv} with $f(t)=t^{d-1}$, it holds that
$d^d-(d-2)^d> 2 d (d-1)^{d-1}$
for $d\geq 3$. This shows that
\[
\frac{\|u^d-v^d\|_\sigma^2}{\|u^d-v^d\|_F^2}>\left(1-\frac{1}{d}\right)^{d-1}
\]
for all $t\in \left[\sqrt{\frac{1}{d-1}},1\right]$.
It hence remains to verify this inequality 
 for all $t\in \left( 0,\sqrt{\frac{1}{d-1}}\right)$, which is a little bit more involved. The starting point is another lower bound for the spectral norm, namely
\[
\|u^d-v^d\|_\sigma \geq \left\langle u^d-v^d, \begin{pmatrix}\sqrt{(d-1)/d}\\1/\sqrt{d}\end{pmatrix}^d\right\rangle_F=\frac{1}{\sqrt{d}^d}\Bigg(\left( \sqrt{d-1}+t\right)^d-\left(\sqrt{d-1}-t\right)^d\Bigg)\eqqcolon h(t).
\]
Note that $\frac{u^d-v^d}{\|u^d-v^d\|_F}\to\frac{W_d}{\|W_d\|_F} $ for $t\to 0$. This can be seen by taking the limit of $\frac{u^d-v^d}{t}$ and noting that $g(t)=\|u^d-v^d\|^2_F$ is of order $t^2$ by~\eqref{eq:frobnormofud-vd}. We therefore have
\[
\lim_{t\to0}\frac{h(t)^2}{g(t)}=\left\langle \frac{W_d}{\|W_d\|_F},\begin{pmatrix}\sqrt{(d-1)/d}\\1/\sqrt{d}\end{pmatrix}^d\right\rangle_F^2=\frac{\|W_d\|_\sigma^2}{\|W_d\|^2_F}=\left(1 - \frac{1}{d}\right)^{d-1},
\]
where the second and third equalities have been shown in section~\ref{sec:wTensor}. We now claim that 
\[
\frac{d}{dt}\frac{h(t)^2}{g(t)}>0\text{ for $\textstyle t\in \left( 0,\sqrt{\frac{1}{d-1}}\right)$}
\]
which then proves the assertion. This claim is equivalent to the positivity of  
\begin{align*}
\frac{\sqrt{d}^d}{4d}\big(2h'(t)g(t)-g'(t)h(t)\big)
&=
\Bigg[ \! \left(\sqrt{d-1}+t\right)^{d-1}+\left(\sqrt{d-1}-t\right)^{d-1}\Bigg]\Bigg[(1+t^2)^d-(1-t^2)^d\Bigg]\\
&\quad-t\Bigg[\! \left(\sqrt{d-1}+t\right)^{d}-\left(\sqrt{d-1}-t\right)^{d}\Bigg]
\Bigg[(1+t^2)^{d-1}+(1-t^2)^{d-1}\Bigg].
\end{align*}
Elementary manipulations give
\begin{align}\label{eq:rewrite}
&\frac{\sqrt{d}^d}{4d}\big(2h'(t)g(t)-g'(t)h(t)\big)\notag
\\
&=
   \Bigg[\!\left(\sqrt{d-1}+t\right)^{d-1} (1+t^2)^{d-1}-\left(\sqrt{d-1}-t\right)^{d-1} (1-t^2)^{d-1}\Bigg]
   \left(1-t\sqrt{d-1}\right)\notag
  \\
&\quad-\Bigg[\!\left(\sqrt{d-1}+t\right)^{d-1} (1-t^2)^{d-1}-\left(\sqrt{d-1}-t\right)^{d-1} (1+t^2)^{d-1}\Bigg]
\left(1+t\sqrt{d-1}\right)\notag
\\
&\begin{aligned}
&=\Bigg[\Big(\underbrace{\sqrt{d-1}+t+t^2\sqrt{d-1}+t^3}_{{}\eqqcolon{} b}\Big)^{d-1}-\Big(\underbrace{\sqrt{d-1}-t-t^2\sqrt{d-1}+t^3}_{{}\eqqcolon{} a}\Big)^{d-1}\Bigg]
\left(1-t\sqrt{d-1}\right)
\\
&\quad-\Bigg[\Big(\underbrace{\sqrt{d-1}+t-t^2\sqrt{d-1}-t^3}_{\eqqcolon b'}\Big)^{d-1}-\Big(\underbrace{\sqrt{d-1}-t+t^2\sqrt{d-1}-t^3}_{\eqqcolon a'}\Big)^{d-1}\Bigg]
\left(1+t\sqrt{d-1}\right).
\end{aligned}
\end{align}
Note that for $t\in\left(0,\sqrt{\frac{1}{d-1}}\right)$ we have $b> b' > a' > a$
and 
\[
    b-a=2t\left(1+t\sqrt{d-1}\right),\quad   b'-a'   =2t\left(1-t\sqrt{d-1}\right).
\]
Therefore with $f(t)=(d-1) t^{d-2}$, we can  rewrite~\eqref{eq:rewrite} as 
\[
\frac{1}{4d}\sqrt{d}^d\big(2h'(t)g(t)-g'(t)h(t)\big)=\frac{1}{2t}\left[(b'-a')\int_a^b f(x) \, dx -(b-a) \int_{a'}^{b'} f(x) \, dx \right].
\]
Moreover,
\begin{equation*}
\frac{a+b}{2} =\sqrt{d-1}+2t^3    
    >
   \sqrt{d-1}-2t^3=\frac{a'+b'}{2},
\end{equation*}
and therefore  $a''\coloneqq \frac{a+b -(b'-a')}{2}>a'>a$ and $b>b''\coloneqq \frac{a+b+(b'-a')}{2}>b'$. Since ${a''+b''}={a+b}$ and $a''-b''=a'-b'$, Lemma~\ref{lm:conv} yields
\[
(b'-a')\int_a^b f(x) \, dx \ge(b-a) \int_{a''}^{b''} f(x) \, dx >(b-a) \int_{a'}^{b'} f(x) \, dx,
\] 
where the second inequality follows from monotonicity of $f$. This shows that~\eqref{eq:rewrite} is positive.
\end{proof}

\subsection{Tensors of border rank two}\label{sec:caseLim}
We now consider tensors lying on the boundary of the set of symmetric rank-two tensors.

\begin{prop}\label{prop:boundary}
Let $A$ be a limit of symmetric rank-two tensors and $\rank A>2$. Then
\[
\frac{\|A\|_\sigma^2}{\|A\|^2_F}\geq \left(1-\frac{1}{d}\right)^{d-1}\!=\frac{\|W_d\|^2_\sigma}{\|W_d\|^2_F}
\]
and equality is attained if and only if $A=u^{d-1}v$ for some orthogonal $u$ and $v$, that is, for tensors arising from scaling and orthogonal transformations of tensor $W_d$.
\end{prop}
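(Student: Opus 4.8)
The plan is to understand the structure of symmetric tensors of border rank two that are not of rank two, since these are the boundary tensors where we expect the infimum to be attained. A symmetric tensor $A$ of border rank at most two with $\rank A > 2$ arises as a limit of tensors $\alpha_n u_n^d - \beta_n v_n^d$ where $u_n, v_n$ collapse onto each other as $n \to \infty$. The classical characterization (via apolarity or the structure of the variety of border-rank-two symmetric tensors) tells us that such a boundary tensor must be of the form $A = \mu u^{d-1} v$ for some vectors $u, v$, up to scaling; this is exactly the structure exhibited by $W_d = d e_1^{d-1} e_2$. First I would pin down this normal form rigorously: after an orthogonal change of coordinates and scaling I can write $A = u^{d-1} v$, and by decomposing $v = c u + s u^\perp$ with $u^\perp \perp u$ and $\|u\| = 1$, I would separate the component of $A$ along $u^d$ from the genuinely ``off-diagonal'' part. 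The component $c\, u^d$ is a rank-one summand, so I expect to reduce to the case where $v \perp u$, i.e.\ the pure $W_d$ shape, and account for the $c u^d$ term via its contribution to both norms.

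Once the normal form $A = u^{d-1}(cu + s w)$ with $\|u\| = \|w\| = 1$, $w \perp u$, $c^2 + s^2 = 1$ (after scaling) is established, the computation becomes two-dimensional: I may assume $A \in \Sym_d(\R^2)$ with $u = e_1$, $w = e_2$, so that $A = e_1^{d-1}(c e_1 + s e_2) = c\, e_1^d + s\, e_1^{d-1} e_2$. Then $p_A(x,y) = c\, x^d + s\, x^{d-1} y$ on the unit circle, and $\|A\|_F^2 = c^2 + s^2 = 1$ after normalization, or more precisely $\|A\|_F^2 = c^2 \|e_1^d\|_F^2 + s^2 \|e_1^{d-1}e_2\|_F^2 = c^2 + s^2 d$ using $\|W_d\|_F^2 = d$ from section~\ref{sec:wTensor} and the orthogonality $\langle e_1^d, e_1^{d-1} e_2\rangle_F = 0$. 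The spectral norm is the maximum of $|c x^d + s x^{d-1} y|$ over $x^2 + y^2 = 1$. I would then compute, via the same KKT analysis used for $W_d$, the ratio $\|A\|_\sigma^2 / \|A\|_F^2$ as an explicit function of the single parameter $c/s$ (or equivalently an angle), and show it is minimized precisely at $c = 0$, where it equals $(1 - 1/d)^{d-1}$.

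The key analytic step is therefore to prove the one-parameter minimization: that adding any nonzero multiple of the pure rank-one tensor $e_1^d$ to $W_d$ strictly increases the norm ratio. Heuristically this is because $e_1^d$ is itself a rank-one tensor achieving ratio $1$, so mixing it in pulls the ratio up; but making this rigorous requires controlling how the maximizer of the spectral-norm optimization moves as $c$ varies, which is the main obstacle. I would handle it by writing the ratio as $R(c,s) = \max_{x^2+y^2=1}(cx^d + sx^{d-1}y)^2 / (c^2 + s^2 d)$ and either differentiating the explicit maximizer expression or using a convexity/monotonicity argument analogous to Lemma~\ref{lm:conv}. An alternative, possibly cleaner, route is to argue directly with the lower bound $\|A\|_\sigma \geq \langle A, z^d \rangle_F$ for the specific optimal direction $z = (\sqrt{(d-1)/d}, 1/\sqrt d)$ of $W_d$ together with a separate treatment of the sign of $c$, showing the bound is strict unless $c = 0$. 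Finally I would confirm that equality forces $c = 0$, i.e.\ $u \perp v$, establishing that the only border-rank-two minimizers are orthogonal-transformation-and-scaling images of $W_d$, which closes the characterization and completes the proof together with Propositions~\ref{prop:NoCriticalPoint}, \ref{prop: number of maximizer}, \ref{lm:sum}, and~\ref{prop:diffequal}.
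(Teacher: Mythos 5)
Your overall architecture matches the paper's: first a normal form for the boundary tensors, then a two-dimensional, one-parameter minimization showing the ratio is smallest exactly at the pure $W_d$ shape. On the normal form, you invoke the classical characterization $A = u^{d-1}\tilde v$ rather than proving it; that is defensible, since the paper itself calls this parametrization well known (it nevertheless gives a self-contained proof in Lemma~\ref{lm:boundarytensors}, writing $A_n = u_n^d \pm v_n^d$, decomposing $v_n = s_n u_n + t_n w_n$, and using pairwise orthogonality of the binomial expansion terms to force $t_n^k\|u_n\|^{d-k}\to 0$ for $k>1$). However, your Frobenius-norm computation contains a slip: from $\|W_d\|_F^2 = d$ with $W_d = d\,e_1^{d-1}e_2$ one gets $\|e_1^{d-1}e_2\|_F^2 = 1/d$, so for $A = c\,e_1^d + s\,e_1^{d-1}e_2$ the correct value is $\|A\|_F^2 = c^2 + s^2/d$, not $c^2 + s^2 d$. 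The paper avoids this by parametrizing $A = a\,e_1^d + b d\,e_1^{d-1}e_2$, which gives $\|A\|_F^2 = a^2 + b^2 d$. This is harmless but must be fixed before any ratio computation.

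The genuine gap is in the key analytic step, which you leave as a plan with two candidate methods, and the route you call ``possibly cleaner'' fails as stated. Testing against the single direction $z = \bigl(\sqrt{(d-1)/d},\, 1/\sqrt{d}\bigr)$ gives, in the paper's normalization $a^2 + b^2 d = 1$, the bound~\eqref{eq:1lowerbound}, which exceeds the target $\left(1-\frac{1}{d}\right)^{(d-1)/2}$ precisely when $a\sqrt{d-1} + bd > \sqrt{d}$; at $a = 1$, $b = 0$ the left-hand side is $\sqrt{d-1} < \sqrt{d}$, so for $a$ near $1$ this test vector provably yields a value \emph{below} the target no matter how signs are treated --- the obstruction is the \emph{magnitude} of the $u^d$-component, not its sign as your ``separate treatment of the sign of $c$'' suggests. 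The necessary fix is exactly the paper's second lower bound~\eqref{eq:2lowerbound}, $\|A\|_\sigma \ge \langle A, e_1^d\rangle_F = a$, with a case split at $a = 2\sqrt{d(d-1)}/(2d-1)$: the first bound handles $0 < a$ below this threshold, while above it $a^2 \ge 4d(d-1)/(2d-1)^2 > (d-1)/d > (1-1/d)^{d-1}$ for $d \ge 3$. Your primary KKT route is in fact tractable here --- stationarity of $c x^d + s x^{d-1}y$ on the circle reduces to the quadratic $s x^2 - dc\,xy - (d-1)s\,y^2 = 0$ in $x/y$, so the maximizer is explicit --- but you never carry out the resulting minimization over $c$, and your heuristic that mixing in a ratio-one tensor ``pulls the ratio up'' is not a valid principle (the sum of two orthogonal rank-one tensors has ratio $1/\sqrt{2} < 1$). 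As written, the decisive strict inequality for $c \neq 0$, and hence the equality characterization, is asserted rather than proved.
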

The boundary of rank-two tensors is well studied. We require the following well-known parametrization, see, e.g.,~\cite{BL_third_14}. We offer a self-contained proof for completeness.
\begin{lm}\label{lm:boundarytensors}
Let $A$ be a limit of symmetric rank-two tensors and $\rank A>2$. Then $A$ is of the form 
\[
A
=a u^d+bdu^{d-1}v
\] 
with $\langle u,v\rangle=0$ and $\|u\|=\|v\|=1$. 
\end{lm}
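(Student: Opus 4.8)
The plan is to work directly with a sequence of symmetric rank-two tensors converging to $A$ and to track which terms survive as the two rank-one summands collide. Write $A_n=\alpha_n x_n^d+\beta_n y_n^d\to A$ with $\|x_n\|=\|y_n\|=1$ and $x_n,y_n$ linearly independent, and pass to a subsequence so that $x_n\to x$ and $y_n\to y$ on the unit sphere. First I would dispose of the nondegenerate case: if $x$ and $y$ are linearly independent, then $\langle x_n,y_n\rangle^2\to\langle x,y\rangle^2<1$, so the Gram determinant $1-\langle x_n,y_n\rangle^{2d}$ of $x_n^d,y_n^d$ converges to $1-\langle x,y\rangle^{2d}>0$ and is bounded away from zero. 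Since $\|A_n\|_F^2=\alpha_n^2+\beta_n^2+2\alpha_n\beta_n\langle x_n,y_n\rangle^d$ is bounded, this forces $\alpha_n,\beta_n$ to stay bounded, and passing to a further subsequence gives $A=\alpha x^d+\beta y^d$ of rank at most two, contradicting $\rank A>2$. Hence $y=\pm x$; replacing $y_n$ by $-y_n$ and $\beta_n$ by $(-1)^d\beta_n$ if necessary, I may assume $x_n\to u$ and $y_n\to u$ for a common unit vector $u$.

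In this degenerate case I introduce local coordinates adapted to the collision. Since $x_n\neq\pm y_n$, write $y_n=\cos\theta_n\, x_n+\sin\theta_n\, w_n$ with $w_n\perp x_n$, $\|w_n\|=1$ and $\theta_n\to 0$, and pass to a subsequence with $w_n\to w$, a unit vector orthogonal to $u$. The tensors $\hat e_k^{(n)}=\binom dk^{1/2}x_n^{d-k}w_n^k$, $k=0,\dots,d$, form an orthonormal basis of $\Sym_d(\spa\{x_n,w_n\})$, using the elementary identities $\langle x_n^{d-i}w_n^i,x_n^{d-j}w_n^j\rangle_F=0$ for $i\neq j$ and $\|x_n^{d-k}w_n^k\|_F^2=\binom dk^{-1}$, valid because $x_n\perp w_n$. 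Expanding $y_n^d$ in this basis, I obtain $A_n=\sum_{k=0}^d c_k^{(n)}\hat e_k^{(n)}$ with $c_0^{(n)}=\alpha_n+\beta_n\cos^d\theta_n$, $c_1^{(n)}=\beta_n\sqrt d\,\cos^{d-1}\theta_n\sin\theta_n$, and $c_k^{(n)}=\beta_n\binom dk^{1/2}\cos^{d-k}\theta_n\sin^k\theta_n$ for $k\ge 2$.

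The heart of the argument, and the step I expect to be the main obstacle, is the balancing of these components as the coefficients may blow up. Since $\hat e_k^{(n)}\to\hat e_k:=\binom dk^{1/2}u^{d-k}w^k$, the convergence $A_n\to A$ yields $c_k^{(n)}=\langle A_n,\hat e_k^{(n)}\rangle_F\to\langle A,\hat e_k\rangle_F$ for every $k$. In particular $c_1^{(n)}$ converges, and as $\cos\theta_n\to 1$ this shows that $\beta_n\sin\theta_n$ stays bounded. Writing $c_k^{(n)}=(\beta_n\sin\theta_n)\binom dk^{1/2}\cos^{d-k}\theta_n\,\sin^{k-1}\theta_n$ for $k\ge 2$, the extra factor $\sin^{k-1}\theta_n\to 0$ then forces $c_k^{(n)}\to 0$, hence $\langle A,\hat e_k\rangle_F=0$ for all $k\ge 2$. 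Passing to the limit in the finite sum gives $A=a\,u^d+c\,u^{d-1}w$ for some scalars $a,c$, where $A$ lies in $\Sym_d(\spa\{u,w\})$ precisely because only $d+1$ terms occur and each normalized basis tensor converges.

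It remains to invoke the rank hypothesis: if $c=0$ then $A$ has rank at most one, contradicting $\rank A>2$, so $c\neq 0$. Setting $v=w$ and absorbing constants via $c\,u^{d-1}w=bd\,u^{d-1}v$, I obtain $A=a\,u^d+bd\,u^{d-1}v$ with $\langle u,v\rangle=0$ and $\|u\|=\|v\|=1$, as claimed. Beyond the balancing estimate, the only points requiring care are the sign and parity bookkeeping when reducing to a common limit $u$, and keeping the orthogonal frame $\{x_n,w_n\}$ consistent along the chosen subsequence.
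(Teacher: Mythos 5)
Your proof is correct and takes essentially the same route as the paper: you decompose the second direction into a component along the first plus an orthogonal unit vector, expand the $d$-th power into the pairwise orthogonal terms $x_n^{d-k}w_n^k$, and use boundedness of the $k=1$ coefficient to show all terms with $k\geq 2$ vanish in the limit, leaving $A=a\,u^d+bd\,u^{d-1}v$. Your preliminary case split (limit directions independent versus coincident) and the unit-vector/angle parametrization merely replace the paper's observation that the vectors $u_n,v_n$ with absorbed scalars must be unbounded; the core balancing argument is identical.
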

\begin{proof}
Let $A_n=u_n^d\pm v_n^d$ with $\lim_{n\to\infty}A_n=A$ or $\lim_{n\to\infty}A_n=-A$. It is not difficult to see that $u_n$ and $v_n$ must be unbounded since otherwise there is a subsequence of $A_n$ converging to a tensor of rank at most two, contradicting $\rank A>2$.
We write $v_n=s_n u_n+t_n w_n$ with $\|w_n\|=1$ and $\langle u_n, w_n\rangle=0$. Then 
\[
A_n=(1\pm s_n^d)u_n^d\pm\sum_{k=1}^d \binom{d}{k} s_n^{d-k} t_n^k  u_n^{d-k}w_n^k,
\] and it can be checked that all terms are pairwise orthogonal. Hence, since $A_n$ converges, all terms must be bounded and by passing to a subsequence we can assume that all of them converge. Due to  $\|u_n\|\to\infty$ we have $1\pm s_n^d\to 0$ for the first term, which implies that the sequence $s_n$ is bounded. Therefore, considering the term $k=1$,  the sequence $t_n\|u_n\|^{d-1}$ is bounded which automatically implies $t_n^k\|u_n\|^{d-k}\to 0$ for all $k>1$.  We conclude that
\[
\lim_{n\to\infty} A_n=\lim_{n\to\infty} ( 1\pm s_n^d) u_n^d+\lim_{n\to\infty}ds_n^{d-1}t_n u_n^{d-1}w_n = a u^d+bd u^{d-1} v
\]
which proves the assertion.
\end{proof}
\begin{proof}[Proof of Proposition~\ref{prop:boundary}]
Using Lemma~\ref{lm:boundarytensors}, scaling and orthogonal transformations, we can assume  
$A=a e_1 ^d+bd e_1^{d-1}e_2 \in\Sym_d(\R^2)$ with $a,b\geq0$. Then $\|A\|_F^2=a^2+b^2d$ since the tensors $e_1^d$ and $e_1^{d-1}e_2$ are orthogonal and $\|de_1^{d-1}e_2\|_F^2=d$.
We have the following two lower bounds  for the spectral norm:
\begin{equation}\label{eq:1lowerbound}
\|A\|_\sigma\geq\left \langle ae_1^d+bde_1^{d-1}e_2 ,\frac{1}{\sqrt{d}^d}\begin{pmatrix}\sqrt{d-1}  \\ 1\end{pmatrix}^d  \right\rangle_F =\frac{1}{\sqrt{d}^d}\left(a\sqrt{d-1}^d+bd\sqrt{d-1}^{d-1}\right)
\end{equation}
and
\begin{equation}\label{eq:2lowerbound}
\|A\|_\sigma\geq \left \langle ae_1^d+bde_1^{d-1}e_2 ,e_1^d\right\rangle _F =
a.
\end{equation}
We can restrict to tensors $A$ with Frobenius norm $\|A\|^2_F=a^2+b^2d=1$ and need to show that 
\[
\|A\|_\sigma>\left(1-\frac{1}{d}\right)^{\frac{d-1}{2}}
\]
whenever $a>0$. The first lower bound~\eqref{eq:1lowerbound}  implies that this is true whenever $b> \frac{\sqrt{d}-a\sqrt{d-1}}{d}$. Together with $1=a^2+b^2d$ and $a,b\geq0$ this verifies the claim for $0< a< \frac{2\sqrt{d(d-1)}}{2d-1}$. If $a\geq  \frac{2\sqrt{d(d-1)}}{2d-1}$, then the  second lower bound~\eqref{eq:2lowerbound} yields the desired estimate
\[
\|A\|_\sigma^2\geq a^2\geq\left(\frac{2\sqrt{d(d-1)}}{2d-1}\right)^2>\frac{d-1}{d}>\left(1-\frac{1}{d}\right)^{d-1}
\]
for $d\geq 3$.
\end{proof}
This concludes the proof of Theorem~\ref{thm:main}.

\section{Approximation ratio for nonsymmetric rank-two tensors}\label{sec: nonsymmetric}

Recall that the spectral norm for general $n_1 \times \dots \times n_d$ tensors is defined as
\begin{equation}\label{eq: spectral norm}
\|A \|_\sigma = \max_{\| u_1 \| = \dots = \| u_d \| = 1 }\langle A, u_1 \otimes \dots \otimes u_d \rangle_F.
\end{equation}
The result for symmetric tensors raises the question whether
the inequality
\[
\|A\|_\sigma>\left(1-\frac{1}{d}\right)^{\frac{d-1}{2}}\|A\|_F 
\]
is also true for general real tensors of order $d \ge 3$ and rank at most two. 
As stated in Theorem~\ref{thm:main2}, the answer is indeed affirmative and a consequence of the following interesting fact.

\begin{prop}\label{prop:makesymmetric}
Let $A$ be a real $n_1 \times \dots \times n_d$ tensor of rank at most two. Then there is is symmetric rank-two tensor $A_\mathsf S\in\Sym_d (\R^2)$ with $\|A\|_F=\| A_\mathsf S\|_F$ and $\|A\|_\sigma\geq\| A_\mathsf S\|_\sigma$.
\end{prop}

For the proof, we will require two lemmas. The first is on the behavior of successively taking geometric means of positive real numbers, and the second on the relation of Frobenius and spectral norm of two particular $2\times 2$ matrices.

\begin{lm}\label{lm:geometricmeans}
Let $x,z\geq 0$, $k>0$, and define the sequence
\[
y_0=x,\quad y_1=\left(x^{k-1}z\right)^{\frac{1}{k}}, \quad y_{\ell+2}=\left(y_{\ell+1}^{k-1}y_{\ell}^{}\right)^{\frac{1}{k}}.
\]
Then $\lim_{\ell\to\infty} y_\ell=\left(x^k z\right)^{\frac{1}{k+1}}$.
\end{lm}
\begin{proof}
We may assume  $x,z> 0$, otherwise the result follows immediately. We show via induction that
\begin{equation}\label{eq:inductionlmgeom}
y_\ell=\left(x^{{k^{\ell+1}+(-1)^{\ell}}}z^{{k^{\ell}+(-1)^{\ell-1}}}\right)^{\frac{1}{k^{\ell}(k+1)}}.
\end{equation}
The cases $\ell=0$ and $\ell=1$ follow directly. Now let \eqref{eq:inductionlmgeom} be true for $1,\ldots,\ell+1$. Then 
\begin{align*}
y_{\ell+2}
&=\left(y_{\ell+1}^{k-1}y_\ell^{}\right)^{\frac{1}{k}}
=x^{\left(\frac{(k-1)\left(k^{\ell+2}+(-1)^{\ell+1}\right)}{k^{\ell+2}(k+1)}+\frac{k^{\ell+1}+(-1)^{\ell}}{k^{\ell+1}(k+1)}\right)}
z^{\left(\frac{(k-1)\left(k^{\ell+1}+(-1)^\ell\right)}{k^{\ell+1}(k+1)}+\frac{k^\ell+(-1)^{\ell-1}}{k^\ell(k+1)}\right)}
\\
&=\left(x^{{k^{\ell+3}+(-1)^{\ell+2}}}  z^{{k^{\ell+2}+(-1)^{\ell+1}}}\right)^{\frac{1}{k^{\ell+2}(k+1)}},
\end{align*}
proving \eqref{eq:inductionlmgeom}. Taking the limit $\ell\to\infty$ gives the result.
\end{proof}

\begin{lm}\label{lm:matrixspectralnorm}
Let $a,b \in \R$ and $0\leq x_1,x_2\leq 1$. Define the matrices 
\[
S=\begin{pmatrix}
a+bx_1x_2 &b\sqrt{x_1x_2-x_1^2x_2^2} \\
b\sqrt{x_1x_2-x_1^2x_2^2} & b(1-x_1x_2)
\end{pmatrix}
\quad\! \text{and}\quad\!
T=\begin{pmatrix}
a+bx_1x_2 &bx_1\sqrt{1-x_2^2} \\
bx_2\sqrt{1-x_1^2} & b\sqrt{(1-x_1^2)(1-x_2^2)}
\end{pmatrix}.
\]
Then $\|S\|_F=\|T\|_F$ and $\|S\|_\sigma\leq\|T\|_\sigma $.
\end{lm}
\begin{proof}
A direct calculation shows that $\|S\|_F=\|T\|_F$. The singular values of $2\times 2$ matrices are given by $\sigma_{1,2}^2={F^2}/{2}\pm\sqrt{{F^4}/{4}-\abs{D}^2}$, where $F$ is the Frobenius norm and $D$ is the determinant of the matrix. We have
\[
\abs{\det S}^2=a^2b^2(1-2x_1x_2+x_1^2x_2^2) \quad \text{and}\quad 
\abs{\det T}^2=a^2b^2(1-x_1^2-x_2^2+x_1^2x_2^2).
\]
Since $2x_1x_2\leq x_1^2+x_2^2$ implies $\abs{\det T}^2 \leq\abs{\det S}^2$, the largest singular value of $T$, which equals its spectral norm, is larger or equal to the largest singular value of $S$.
\end{proof}

\begin{proof}[Proof of~\Cref{prop:makesymmetric}]
Write $A=\alpha U + \beta V$ where $U=u_1 \otimes \dots \otimes u_d$ and $V=v_1 \otimes \dots \otimes v_d$ with $\|u_i\|=\|v_i\|=1$. Then $\|A\|_F^2=\alpha^2 + 2\alpha\beta\langle U, V\rangle_F+\beta^2$. We may assume that $u_i,v_i\in\R^2$ and after an orthogonal change of bases and possibly changing sign of $\beta$, we may also assume that
\[
u_i=e_1, \quad v^{}_i=x_i^{}e^{}_1+ \sqrt{1-x_i^2}e^{}_2 \quad \text{with \quad $0\leq x_i\leq 1$.}
\]
Our goal is to show that replacing any $k$ factors $v_{i_1},\ldots,v_{i_k}$ of $V$ with the same unit norm vector $v$ defined by
\[
v=x e_1 +\sqrt{1-x^2} e_2 \quad \text{with} \quad x= \Bigg(\prod_{j=1}^k x_{i_j}\Bigg)^{1/k}
\]
leads to a tensor with the same Frobenius norm but smaller spectral norm. Since Frobenius and spectral norm are invariant under permutation of tensor factors, it suffices to prove this for the case that the first $k$ vectors $v_1,\dots,v_k$ are replaced in this way. The resulting tensor is denoted by $A_k=\alpha U+\beta V_k$ with
$
V_k= v \otimes \dots \otimes v \otimes v_{k+1} \otimes \dots \otimes v_d
$ and since \[
\langle U, V_k\rangle_F=\prod_{i=1}^k \langle u_i,v\rangle\prod_{i=k+1}^d \langle u_i,v_i \rangle=x^k \prod_{i=k+1}^d x_i=\prod_{i=1}^d x_i=\prod_{i=1}^d \langle u_i,v_i \rangle=\langle U, V\rangle_F,
\]
the Frobenius norms of $A$ and $A_k$ indeed coincide. In the remainder of the proof we show by induction that the spectral norm does not increase with $k$, i.e., $\|A_{k+1}\|_\sigma \leq \|A_{k}\|_\sigma\leq \|A\|_\sigma$. For $k=d$ this provides a symmetric tensor with the desired properties.

We start with $k=2$. Let $w_1,\ldots, w_d$ be the maximizers in 
\[
\max_{\|w_1\|=\cdots=\|w_d\|=1}\langle A_2,w_1 \otimes \dots \otimes w_d\rangle_F=\|A_2\|_\sigma.
\]
Let $a=\alpha \prod_{i=3}^d\langle u_i,w_i \rangle$, $b=\beta \prod_{i=3}^d\langle v_i,w_i \rangle$, and consider the matrices
\[
T=a e_1^{} e_1^{T}+ b v_1^{}v_2^{T}=\begin{pmatrix}
a+bx_1x_2 &bx_1\sqrt{1-x_2^2} \\
bx_2\sqrt{1-x_1^2} & b\sqrt{(1-x_1^2)(1-x_2^2)}
\end{pmatrix}
\]
and
\[
S=a e_1^{} e_1^{T}+ b v^{}v_{}^{T} = \begin{pmatrix}
a+bx_1x_2 &b\sqrt{x_1x_2-x_1^2x_2^2} \\
b\sqrt{x_1x_2-x_1^2x_2^2} & b(1-x_1x_2)
\end{pmatrix}.
\]
They represent the bilinear forms
\[
\tilde w_1^T T \tilde w_2=\langle A, \tilde w_1\otimes \tilde w_2 \otimes w_3 \otimes \dots \otimes w_d \rangle_F\quad \text{and}\quad \tilde w_1^T S \tilde w_2=\langle A_2, \tilde w_1\otimes \tilde w_2 \otimes w_3 \otimes \dots \otimes w_d \rangle_F
\]
in $\tilde w_1 $ and $ \tilde w_2$. Clearly,
\[
\|T\|_\sigma=
\max_{\|\tilde w_1\|=\|\tilde w_2\|=1}\langle A, \tilde w_1\otimes \tilde w_2 \otimes w_3 \otimes \dots \otimes w_d \rangle_F\leq \|A\|_\sigma
\]
and 
\[
\|S\|_\sigma=
\max_{\|\tilde w_1\|=\|\tilde w_2\|=1}\langle A_2, \tilde w_1\otimes \tilde w_2 \otimes w_3 \otimes \dots \otimes w_d \rangle_F
=
\|A_2\|_\sigma.
\]\Cref{lm:matrixspectralnorm} implies $\|S\|_\sigma\leq \|T\|_\sigma$ and therefore $\|A_2\|_\sigma\leq \|A\|_\sigma$.

For the induction step, let $2 \le k < d$ and assume that replacing any $k$ factors of $V$ in the described manner always results in a tensor with a smaller or equal spectral norm. Note that here $V$ was in principle arbitrary. Starting from the given $V$, we now construct a sequence $\widetilde V_0, \widetilde V_1, \dots$ of rank-one tensors in which the first $k$ factors and then the second to $(k+1)$-st factors are successively replaced:
\begin{align*}
\widetilde{V}_0 &= \tilde v_0\otimes \dots \otimes \tilde v_0 \otimes v_{k+1} \otimes (v_{k+2} \otimes \dots \otimes v_d), \\
\widetilde{V}_1 &= \tilde v_0 \otimes \tilde v_1 \otimes \dots \otimes \tilde v_{1 \phantom{{}+k}} \otimes (v_{k+2} \otimes \dots \otimes v_d), \\
\widetilde{V}_2 &= \tilde v_2\otimes \dots \otimes \tilde v_2 \otimes \tilde v_{1 \phantom{{}+k}} \otimes (v_{k+2} \otimes \dots \otimes v_d) \\
&\vdots
\end{align*}
and so on (the term in brackets disappears when $k=d-1$). By induction hypothesis, the corresponding sequence $B_\ell=\alpha U+\beta\widetilde{V}_\ell$ of tensors has nonincreasing spectral norm and in particular $\| B_\ell \|_\sigma \le \| B_0 \|_\sigma = \| A_k \|_\sigma \le \| A \|_\sigma$. We claim the $B_\ell$ converge to $A_{k+1}$, which proves $\| A_{k+1} \|_\sigma \le \| A_k \|_\sigma \le \| A \|_\sigma$ as desired. Indeed, the unit norm vectors
\[
\tilde v^{}_\ell=y^{}_\ell e^{}_1+\sqrt{1-y_\ell^2}e_2^{}
\]
above are constructed according to 
\[
y_0=x =\prod_{i=1}^k x_i^{1/k},\quad y_1= \left(x^{k-1} x_{k+1}\right)^\frac{1}{k},\quad y_{\ell+2}
=\left(y_{\ell+1}^{k-1} y^{}_{\ell}\right)^\frac{1}{k}.
\]
 By \Cref{lm:geometricmeans}, this sequence converges to 
\[
\left(\left(\prod_{i=1}^k x_i^{1/k}\right)^k x^{}_{k+1}\right)^{1/(k+1)}=\prod_{i=1}^{k+1} x_i^{1/(k+1)},
\]
that is, the $\tilde V_\ell$ converge to $V_{k+1}$ and hence the $B_\ell$ converge to $A_{k+1}$. This concludes the proof.
\end{proof}

Based on Proposition~\Cref{prop:makesymmetric} we obtain a proof for Theorem~\ref{thm:main2} for general real rank-two  directly from \Cref{thm:main}.

\begin{proof}[Proof of Theorem~\ref{thm:main2}]
Again, since $A$ has rank at most two, it suffices to prove the statement for general (i.e. nonsymmetric) $2 \times \dots \times 2$ tensors. Obviously, the minimal ratio $\| A \|_\sigma / \| A \|_F$ over general $2 \times \dots \times 2$ tensors is smaller or equal than the minimum over symmetric ones. However, by~\Cref{prop:makesymmetric} the converse is also true. The result hence follows from~\Cref{thm:main}. 
\end{proof}

Theorem~\ref{thm:main2} suggests an interesting relation between  results in~\cite{CKP_Extreme_00} and~\cite{KP_embedding_06}. The authors in~\cite{CKP_Extreme_00} found that the minimal possible ratio of spectral and Frobenius norm among all tensors in $\C^2\otimes \C^2\otimes \C^2$ is $\frac{2}{3}$, while in~\cite{KP_embedding_06} it is shown that the minimal ratio for tensors in $\R^2\otimes \R^2\otimes \R^2$  is only $\frac{1}{2}$. However, Theorem~\ref{thm:main2} states that border rank-two tensors in $\R^{2\times 2\times 2}$ have the minimal ratio $\frac{2}{3}$. This might be related to the fact that tensors of real rank two and three both have positive volume in $\R^2\otimes \R^2\otimes \R^2$, while almost all tensors in $\C^2\otimes \C^2\otimes \C^2$  have complex rank two.

\bibliographystyle{plain}
\bibliography{main}
\end{document}